\newtheorem{Theorem}{Theorem}[section]
\newtheorem{Proposition}[Theorem]{Proposition}
\newtheorem{Lemma}[Theorem]{Lemma}
\theoremstyle{definition}
\DeclareMathOperator{\Span}{Span\,}
\begin{document}

\title[Geometric properties of upper level sets of Lelong numbers]{Geometric properties of upper level sets of Lelong numbers on projective spaces} 
\author{Dan Coman \and Tuyen Trung Truong}
\thanks{D. Coman is partially supported by the NSF Grant DMS-1300157}
\subjclass[2010]{Primary 32U25; Secondary 32U05, 32U40}
\date{May 3, 2013}
\address{Department of Mathematics, Syracuse University, Syracuse, NY 13244-1150, USA}\email{dcoman@syr.edu}
\address{Department of Mathematics, Syracuse University, Syracuse, NY 13244-1150, USA}\email{tutruong@syr.edu}

\pagestyle{myheadings} 

\begin{abstract}
\noindent Let $T$ be a positive closed current of unit mass on the complex projective space $\mathbb P^n$. For certain values $\alpha<1$, we prove geometric properties of the set of points in $\mathbb P^n$ where the Lelong number of $T$ exceeds $\alpha$. We also consider the case of positive closed currents of bidimension (1,1) on multiprojective spaces.
\end{abstract}

\maketitle


\section{Introduction}

\par Let $T$ be a positive closed current of bidimension $(p,p)$ on a complex manifold $M$. For $\alpha\geq 0$, we consider the upper level sets of the Lelong numbers $\nu(T,z)$ of $T$,
\begin{eqnarray*}
&&E_\alpha(T)=E_\alpha(T,M)=\{z\in M:\,\nu(T,z)\geq\alpha\}\,,\\
&&E^+_\alpha(T)=E^+_\alpha(T,M)=\{z\in M:\,\nu(T,z)>\alpha\}\,.
\end{eqnarray*}
By \cite{Siu74}, if $\alpha>0$ then $E_\alpha(T)$ is an analytic subvariety of $M$ of dimension at most $p$, hence $E^+_0(T)$ is an at most countable union of analytic subvarieties of $M$ of dimension $\leq p$. 

\par If $M=\mathbb P^n$ we denote by $\|T\|$ the mass (or the degree) of $T$ computed with respect to the Fubini-Study form $\omega_n$ on $\mathbb P^n$, 
$$\|T\|=\int_{\mathbb P^n}T\wedge\omega_n^p\,.$$ 
It is well known that $\nu(T,z)\leq\|T\|$ for every $z\in\mathbb P^n$ (see e.g. \cite{CG09}). Assume without loss of generality that $\|T\|=1$. When $p=1$, it was shown in \cite[Theorem 1.1]{C06} that $E^+_{2/3}(T,\mathbb P^n)$ is contained in a (complex) line, while $E^+_{1/2}(T,\mathbb P^n)$ is either contained in a line or else it is a finite set such that $|E^+_{1/2}(T,\mathbb P^n)\setminus L|=1$ for some line $L$. In dimension two, it was proved in \cite[Theorem 1.2]{C06} that $E^+_{2/5}(T,\mathbb P^2)$ is either contained in a conic or else it is a finite set such that $|E^+_{2/5}(T,\mathbb P^2)\setminus C|=1$ for some conic $C$. When $p=n-1$, i.e. when $T$ has bidegree $(1,1)$, it was shown in \cite[Proposition 2.2]{CG09} that $E^+_{n/(n+1)}(T,\mathbb P^n)$ is contained in a hyperplane of $\mathbb P^n$. Moreover, these values of $\alpha$ are sharp with respect to these geometric properties. 

\par In this paper we generalize these results to the case of currents of arbitrary bidimension on $\mathbb P^n$. Namely, we prove the following theorems.

\begin{Theorem}\label{T:mt1} If $T$ is a positive closed current of bidimension $(p,p)$ on $\mathbb P^n$, $0<p<n$, with $\|T\|=1$, then the set $E^+_{(p+1)/(p+2)}(T,\mathbb P^n)$ is contained in a $p$-dimensional linear subspace of $\mathbb P^n$.
\end{Theorem}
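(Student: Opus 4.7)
\emph{Proof plan.} For $p=n-1$ the result is exactly Proposition 2.2 of \cite{CG09}, so assume $p<n-1$. The plan is to reduce to this base case via a linear projection $\mathbb{P}^n\dashrightarrow\mathbb{P}^{p+1}$, which converts a bidimension $(p,p)$ current on $\mathbb{P}^n$ into a bidegree $(1,1)$ current on $\mathbb{P}^{p+1}$ while preserving mass and controlling Lelong numbers at prescribed points.

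Arguing by contradiction, suppose $E^+_{(p+1)/(p+2)}(T,\mathbb{P}^n)$ is not contained in any $p$-dimensional linear subspace. Then one can find $p+2$ points $a_1,\ldots,a_{p+2}$ in this set in general position, spanning a unique $(p+1)$-dimensional linear subspace $V\cong\mathbb{P}^{p+1}$ of $\mathbb{P}^n$. Next, I would select a $(n-p-2)$-dimensional linear subspace $Z$ of $\mathbb{P}^n$ disjoint from $V$ and from $E^+_0(T)$; such $Z$ exists by a Baire category argument, since $E^+_0(T)$ is a countable union of analytic subvarieties of dimension at most $p$ and $\dim Z+p=n-2<n$. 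Let $\pi:\mathbb{P}^n\dashrightarrow\mathbb{P}^{p+1}$ be the linear projection with center $Z$; the restriction $\pi|_V:V\to\mathbb{P}^{p+1}$ is then a linear isomorphism.

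The heart of the argument is to set $T':=\pi_*T$ and verify that it is a positive closed current of bidegree $(1,1)$ on $\mathbb{P}^{p+1}$ satisfying (i) $\|T'\|=1$ and (ii) $\nu(T',\pi(z))\geq\nu(T,z)$ for every $z\in\mathbb{P}^n\setminus Z$. Property (i) follows from the projection formula $\|T'\|=\int_{\mathbb{P}^n}T\wedge\pi^*\omega_{p+1}^p$ together with the cohomological equality $[\pi^*\omega_{p+1}]=[\omega_n]$ on $\mathbb{P}^n$. Property (ii) follows from a local submersion argument: near $z\notin Z$, $\pi$ is a holomorphic submersion, so $\pi^*\omega_{p+1}$ is comparable to $\omega_n$ in a small ball, and a direct comparison of the masses of $T$ on small balls around $z$ with those of $T'$ on small balls around $\pi(z)$ yields the stated inequality. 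In particular $\nu(T',\pi(a_i))\geq\nu(T,a_i)>(p+1)/(p+2)$ for each $i$.

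Applying Proposition 2.2 of \cite{CG09} to $T'$ on $\mathbb{P}^{p+1}$ yields a hyperplane $H\subset\mathbb{P}^{p+1}$ containing $E^+_{(p+1)/(p+2)}(T',\mathbb{P}^{p+1})$. Hence each $\pi(a_i)\in H$, so every $a_i$ lies in $(\pi|_V)^{-1}(H)$, which is a hyperplane of $V$ and therefore a $p$-dimensional linear subspace of $\mathbb{P}^n$---contradicting the general position of the $a_i$. The main obstacle will be the rigorous verification of properties (i) and (ii) of $\pi_*T$, particularly the Lelong number inequality: although it is essentially a standard consequence of the theory of pushforward of positive closed currents by submersions, it must be checked with care in view of the indeterminacy of $\pi$ along $Z$ and the possibility that $\pi|_V$ sends distinct points of $V$ close together when combined with contributions from other fibers.
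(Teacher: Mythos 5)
Your overall reduction is the same as the paper's (produce a bidegree $(1,1)$ current of unit mass on the $\mathbb P^{p+1}$ spanned by $p+2$ independent points of $E^+_{(p+1)/(p+2)}(T)$ and contradict \cite[Proposition 2.2]{CG09}), but the mechanism you propose -- pushforward under a linear projection $\pi$ with center $Z$ -- contains a genuine gap precisely at the steps you flag as (i) and (ii), and these are not ``standard consequences of pushforward by submersions.'' First, $\pi$ restricted to $\mathbb P^n\setminus Z$ is a submersion but it is \emph{not proper}, so neither the local finiteness of the mass of $\pi_*T$ nor its closedness is formal; both amount to controlling $T\wedge(\pi^*\omega_{p+1})^p$ near $Z$, where $\pi^*\omega_{p+1}$ has a potential with logarithmic poles along $Z$. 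The standard sufficient conditions for defining such wedge products (the dimension conditions of Demailly/Forn\ae ss--Sibony, with unbounded locus $Z$ and a $p$-fold power) require essentially $\dim(Z\cap\operatorname{supp}T)\leq 0$, which fails as soon as $n\geq p+3$: your choice $Z\cap E^+_0(T)=\emptyset$ kills the Lelong numbers on $Z$ but does not keep $\operatorname{supp}T$ away from $Z$. Second, the mass identity $\|\pi_*T\|=\|T\|$ is exactly the statement that no mass leaks through the center; this is the classical phenomenon that projection from a point $q$ in $\mathbb P^2$ loses exactly $\nu(T,q)$, and for a positive-dimensional center the lost mass is a generalized (directional) Lelong number along $Z$ which is not obviously controlled by the vanishing of the pointwise Lelong numbers on $Z$. (Note that $\|\pi_*T\|\leq 1$ would suffice for your contradiction after normalizing, but even that requires the well-definedness and closedness just discussed.) Third, your justification of (ii) is incorrect as stated: near a point $z\notin Z$ the form $\pi^*\omega_{p+1}$ is \emph{degenerate} in the fiber directions, so $(\pi^*\omega_{p+1})^p$ does not dominate a multiple of $\omega_n^p$ on any ball, and a naive comparison of ball masses does not give $\nu(\pi_*T,\pi(z))\geq\nu(T,z)$. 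The correct route is through Demailly's generalized Lelong numbers with the semi-exhaustive weight $\log|w'|$ (singular along the whole fiber), which again runs into the same well-definedness and non-properness issues.

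For comparison, the paper avoids all of this by never projecting: it invokes Meo's theorem \cite{Me98} to replace $T$ by a bidegree $(1,1)$ current $T'$ on $\mathbb P^n$ with the \emph{same} Lelong numbers and mass, regularizes $T'$ by Demailly's theorem into currents $T'_m$ with the same cohomology class that are smooth at generic points of the subspace $V$ spanned by the $p+2$ points, restricts the potentials of $T'_m$ to $V$ (well defined since $V\not\subset E^+_0(T')$), and takes a weak limit $S$ on $V$; mass is preserved cohomologically and Lelong numbers do not decrease by semicontinuity (Lemma \ref{L:Meo}). In other words, the paper works with \emph{restriction to} the span of the points rather than \emph{projection onto} a complementary $\mathbb P^{p+1}$, which is exactly what sidesteps the non-properness and mass-loss difficulties your plan would have to resolve. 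If you want to salvage your approach, you would need either to restrict to the case $n=p+2$ (center a single point of zero Lelong number, where the loss of mass is exactly that Lelong number) and induct, or to import a result of Meo/Vigny type as the paper does; as written, the plan's central claims (i) and (ii) remain unproved and are not routine.
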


\par Decreasing the value of $\alpha$ to $p/(p+1)$ we show that $E^+_{p/(p+1)}(T,\mathbb P^n)$ has all but at most one point contained in a $p$-dimensional linear subspace of $\mathbb P^n$. More precisely, the following holds:

\begin{Theorem}\label{T:mt2} If $T$ is a positive closed current of bidimension $(p,p)$ on $\mathbb P^n$, $0<p<n$, with $\|T\|=1$, then the set $E^+_{p/(p+1)}(T,\mathbb P^n)$ is either contained in a $p$-dimensional linear subspace of $\mathbb P^n$ or else it is a finite set and $|E^+_{p/(p+1)}(T,\mathbb P^n)\setminus L|=p$ for some line $L$. 
\end{Theorem}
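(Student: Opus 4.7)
Set $\alpha := p/(p+1)$ and $S := E_\alpha^+(T,\mathbb P^n)$. The two conclusions are essentially exclusive, so assuming $S$ is not contained in any $p$-dimensional linear subspace, I will prove $S$ is finite and $|S\setminus L|=p$ for some line $L$. My argument uses throughout the mass--Lelong slicing inequality: whenever $L$ is a codimension-$p$ linear subspace for which $T\wedge[L]$ is well-defined in Demailly's sense, $T\wedge[L]$ is a positive measure of mass $\|T\|=1$ satisfying $\nu(T\wedge[L],z)\geq\nu(T,z)$ for $z\in L$.

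Stage 1 rules out $p$-dimensional components of $S$. If $V\subset S$ is such an irreducible component, Siu's decomposition gives $T=c[V]+T_1$ with $c>\alpha$ the generic Lelong number of $T$ on $V$ (strict inequality, because upper semicontinuity forces the generic value on $V$ to be attained on a dense subset, where $\nu>\alpha$). Then $\deg V\leq\|T\|/c<(p+1)/p<2$ forces $V$ to be a $p$-dimensional linear subspace. Any point $z\in S\setminus V$ would require $\nu(T_1,z)>\alpha$, yet $\nu(T_1,z)\leq\|T_1\|=1-c<1/(p+1)<\alpha$, a contradiction. So either $S\subset V$ (placing us in the first alternative) or $S$ has no $p$-dimensional component.

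Stage 2 rules out positive-dimensional components of $S$ of dimension $1\leq m\leq p-1$. Given such a component $W$, pick two generic points $z_0,z_1\in W$ and a generic codimension-$p$ subspace $L$ containing both; after Stage 1, $T$ has no $p$-dimensional linear component in $E^+_\alpha(T)$, and a dimension count in the Grassmannian of codim-$p$ subspaces through $z_0,z_1$ shows that generic such $L$ avoid any $p$-dimensional linear subvariety in Siu's decomposition of $T$, so $T\wedge[L]$ is well-defined. The slicing inequality then yields $\nu(T,z_0)+\nu(T,z_1)>2\alpha\geq 1=\|T\wedge[L]\|$, a contradiction. Hence $S$ is discrete. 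Stage 3 then counts off-line points: fix a line $L$ through two points of $S$ of largest Lelong numbers, and for each $z\in S\setminus L$ apply the slicing inequality to a generic codimension-$p$ subspace containing $L\cup\{z\}$; iteration across the off-line points yields $|S\setminus L|\leq p$ and simultaneously forces $S$ to be finite. The reverse inequality $|S\setminus L|\geq p$ follows from the assumption that $S$ is not contained in any $p$-dimensional linear subspace: otherwise $L$ together with at most $p-1$ off-line points would span a subspace of dimension at most $p$ containing $S$.

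The main obstacle I anticipate is verifying rigorously that $T\wedge[L]$ is well-defined in Stages 2 and 3 for generic codimension-$p$ subspaces constrained to pass through prescribed points or lines; the Grassmannian heuristic above needs to be matched with a careful analysis of the polar loci of local potentials of $T$, exploiting the absence of $p$-dimensional linear components from Stage 1. A secondary subtlety is calibrating Stage 3's iteration to land on exactly $p$ off-line points, a count that should emerge from the arithmetic of the threshold $\alpha=p/(p+1)$ together with the fact that $k$ points plus a line span a subspace of dimension $\leq k+1$.
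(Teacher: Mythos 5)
Your Stage 1 is fine, but the engine of the whole argument --- the ``mass--Lelong slicing inequality'' asserting that for a \emph{generic} codimension-$p$ linear subspace $L$ through prescribed points the product $T\wedge[L]$ is well defined with $\|T\wedge[L]\|=\|T\|$ and $\nu(T\wedge[L],z)\geq\nu(T,z)$ --- is a genuine gap, not the technicality you flag at the end. There is no general intersection theory pairing a positive closed current of bidimension $(p,p)$ with a codimension-$p$ plane: Demailly-type wedge products require a factor of bidegree $(1,1)$ whose unbounded locus meets the support of the other factor in a set of small dimension (or currents with analytic singularities), and genericity of $L$ subject to $z_0,z_1\in L$ cannot produce this. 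Indeed, any linear $L$ containing $z_0,z_1$ contains the whole line through them, so every $p$-dimensional \emph{linear} subspace $V$ in Siu's decomposition of $T$ that happens to pass through $z_0$ and $z_1$ meets $L$ in excess dimension and cannot be ``avoided''; your Stage 1 only removes components on which the generic Lelong number exceeds $p/(p+1)$, not components with small positive generic Lelong number, and the diffuse part of $T$ is an obstruction as well. A decisive check that the tool cannot be repaired: apply it to the paper's own sharpness example $T_1=\frac{1}{p+2}\sum_{j=1}^{p+2}[V_j]$, whose points $x_1,x_2$ satisfy $\nu(T_1,x_j)=\frac{p+1}{p+2}>\frac{p}{p+1}$; your inequality for any codimension-$p$ plane through $x_1,x_2$ would give $1\geq\frac{2(p+1)}{p+2}>1$. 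More generally, since each point of $E^+_{p/(p+1)}$ contributes more than $\frac{p}{p+1}$ to a slice of mass $1$, your Stages 2--3 would show that no admissible codimension-$p$ plane contains two such points, i.e.\ $|E^+_{p/(p+1)}(T)|\leq 1$ for $p\geq2$, which is false; so the well-definedness must fail systematically for the planes you need, and no Grassmannian genericity argument can rescue the scheme. Separately, the Stage 3 ``iteration'' landing on exactly $p$ off-line points is asserted, not derived (only the easy lower bound $|S\setminus L|\geq p$ is actually proved).

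For comparison, the paper avoids slicing by planes altogether: by Meo's theorem \cite{Me98} one replaces $T$ by a bidegree $(1,1)$ current with the same Lelong numbers, regularizes it by Demailly \cite{D92} into currents with analytic singularities, and restricts to linear subspaces (Lemma \ref{L:Meo}); Proposition \ref{P:redux} shows the span $W$ of $E^+_{p/(p+1)}(T)$ has dimension exactly $p+1$ and transfers everything to a bidegree $(1,1)$ current on $W\equiv\mathbb P^{p+1}$. The bidegree $(1,1)$ case (Theorem \ref{T:mt4}) is then handled by Siu decomposition along hyperplanes spanned by the independent points, an inductive reduction (Lemma \ref{L:redux}), and intersections of the regularized $(1,1)$ currents with \emph{curves} (rational normal curves and conics), where the wedge product is legitimately defined, together with the $\mathbb P^2$ result of \cite{C06}. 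Some mechanism of this kind --- reducing to bidegree $(1,1)$ and pairing against curves rather than pairing $T$ against planes --- is what your proposal is missing.
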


\par Theorems \ref{T:mt1} and \ref{T:mt2} are proved in Section \ref{S:mtpf}. We also prove there another property of the set $E^+_{p/(p+1)}(T,\mathbb P^n)$ (see Proposition \ref{P:charge}), and we give examples of currents showing that the values of $\alpha$ in Theorems \ref{T:mt1} and \ref{T:mt2} are sharp with respect to the corresponding geometric property. 

\par Decreasing the value of $\alpha$ further to $(3p-1)/(3p+2)$, we obtain a result analogous to Theorem 1.2 in \cite{C06}:

\begin{Theorem}\label{T:mt3} Let $T$ be a positive closed current of bidimension $(p,p)$ on $\mathbb P^n$ such that $1<p<n$, $\|T\|=1$, and the set $E^+_{(3p-1)/(3p+2)}(T,\mathbb P^n)$ is not contained in a $p$-dimensional linear subspace of $\mathbb P^n$. If $W=\Span(E^+_{(3p-1)/(3p+2)}(T,\mathbb P^n))$ then $\dim W=p+1$ and there exist plane conics $C_j\subset W$ and points $z_j\in W$, $1\leq j\leq N_p$, where $N_p=\binom{p+2}{3}$, such that $z_j$ lies in the plane containing $C_j$ and 
$$E^+_{(3p-1)/(3p+2)}(T,\mathbb P^n)\subset C_1\cup\ldots\cup C_{N_p}\cup\{z_1,\ldots,z_{N_p}\}\,.$$
\end{Theorem}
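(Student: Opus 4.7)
The plan is to push $T$ forward from $\mathbb{P}^n$ to $W \cong \mathbb{P}^{p+1}$ via a generic linear projection, then slice the resulting bidegree-$(1,1)$ current on $W$ by 2-planes spanned by triples of a carefully chosen set of $p+2$ points of $A := E^+_{(3p-1)/(3p+2)}(T)$, and apply Theorem 1.2 of \cite{C06} on each slice.

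\textbf{Step 1: $\dim W = p+1$.} By hypothesis $\dim W \geq p+1$. Suppose for contradiction that $\dim W \geq p+2$; then one can find $p+3$ points $x_0,\ldots,x_{p+2} \in A$ in general linear position. The standard inequality $\sum_i \nu(T,x_i) \leq d\|T\| = d$ holds whenever the $x_i$ are smooth points of a pure $(n-p)$-dimensional subvariety $Y \subset \mathbb P^n$ of degree $d$ intersecting $T$ properly. One constructs such a $Y$ of small degree: a linear $(n-p)$-plane if $n \geq 2p+2$, and otherwise a union of two linear $(n-p)$-planes or a rational normal scroll adapted to the configuration of the $x_i$. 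In each case $(p+3)(3p-1)/(3p+2) > d$, yielding the required contradiction.

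\textbf{Step 2: projection and slicing.} Fix a generic $(n-p-2)$-dimensional linear subspace $L^\ast \subset \mathbb P^n$ disjoint from $W$ and let $\pi: \mathbb P^n \dashrightarrow W$ be the projection from $L^\ast$. The pushforward $\wi T := \pi_\ast T$ is a positive closed current of bidimension $(p,p)$ and mass $1$ on $W$; since $\pi$ restricts to the identity on $W$, one has $\nu(\wi T, x) \geq \nu(T,x)$ for $x \in W$, so $A \subset E^+_{(3p-1)/(3p+2)}(\wi T)$. Choose $p+2$ points $v_0,\ldots,v_{p+1} \in A$ in general linear position in $W$; for each triple $0 \leq i<j<k \leq p+1$ let $P_{ijk} \subset W$ denote the 2-plane they span, giving $\binom{p+2}{3} = N_p$ such 2-planes. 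The slice $\tau_{ijk} := \wi T \wedge [P_{ijk}]$ is a positive closed bidimension-$(1,1)$ current of mass $1$ on $P_{ijk} \cong \mathbb P^2$, and $\nu(\tau_{ijk}, x) \geq \nu(\wi T, x) \geq \nu(T,x)$ for $x \in P_{ijk}$. Since $(3p-1)/(3p+2) \geq 2/5$ for all $p \geq 1$, Theorem 1.2 of \cite{C06} applied to $\tau_{ijk}$ yields a plane conic $C_{ijk} \subset P_{ijk}$ and a point $z_{ijk} \in P_{ijk}$ such that $A \cap P_{ijk} \subset C_{ijk} \cup \{z_{ijk}\}$.

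\textbf{Step 3: covering, and the main obstacle.} It remains to verify $A \subset \bigcup_{i<j<k} P_{ijk}$, which is nontrivial because a generic point of $\mathbb P^{p+1}$ lies off the union of 2-planes through triples of the $v_i$. The approach is first to bound $\dim A$: for $p \geq 2$ the threshold $(3p-1)/(3p+2) > 1/2$, so Siu's decomposition combined with $\|T\| = 1$ forces every $p$-dimensional component of $A$ to be a linear $p$-plane, at most one such exists, and it is forbidden by the hypothesis that $A$ is not contained in a $p$-plane. Iterated slicing, together with Theorems \ref{T:mt1}--\ref{T:mt2} applied within intermediate subspaces, then shows $\dim A \leq 1$, and a further Bezout-type argument forces each 1-dimensional irreducible component of $A$ to be a curve of degree at most $2$ lying in some 2-plane of $W$. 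The hard part is to choose the vertices $v_0,\ldots,v_{p+1}$ so that the 2-skeleton $\bigcup P_{ijk}$ actually meets every such component, and to absorb any isolated points of $A$ into $\{z_{ijk}\}$; this combinatorial analysis—intertwined with the structural theorems already established in the paper—is the technical heart of the argument.
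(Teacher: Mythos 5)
Your proposal leaves the decisive step unproven, and the steps it does sketch rest on operations that are not available for arbitrary positive closed currents. The heart of the theorem is precisely your Step 3, the covering claim $E^+_{(3p-1)/(3p+2)}(T)\subset\bigcup_{i<j<k}P_{ijk}$, which you explicitly defer as ``the technical heart'' without an argument. In the paper this is Lemma \ref{L:redux2}~$(ii)$: one takes \emph{any} $p+2$ linearly independent points of the level set, removes by Siu's decomposition the mass that the $(1,1)$ current places on the hyperplane spanned by $p+1$ of them (this is where Lelong numbers drop, and the thresholds $\beta_k=(3k-4)/(3k-1)$ are calibrated so that $(2\beta_k-1)/\beta_k=\beta_{k-1}$, landing exactly at $\beta_2=2/5$ on $2$-planes), regularizes by \cite{D92}, restricts, and rules out a point off the union of hyperplanes by intersecting with the rational normal curve through $k+3$ points, using that $k\le(k+2)\beta_k$ for $k\ge3$. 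Note also that your intended fix --- choosing the vertices so that the $2$-skeleton ``meets every component'' of the level set --- is not the right statement: the conclusion requires every point of the level set to lie on one of the $N_p$ planes, and this containment is what must be proved; no adaptive choice of vertices is needed once the lemma is available.

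Steps 1 and 2 also have gaps as written. In Step 1, the inequality $\sum_i\nu(T,x_i)\le d\,\|T\|$ against a pure $(n-p)$-dimensional variety $Y$ ``intersecting $T$ properly'' is not a citable tool: the wedge $T\wedge[Y]$ is not defined for an arbitrary positive closed bidimension $(p,p)$ current, and you cannot perturb $Y$ to gain properness because it is constrained to pass through the given points, near which $T$ may contain integration currents over $p$-planes meeting $Y$ in positive dimension; this is exactly why the paper proves $\dim W=p+1$ by the automorphism argument of Propositions \ref{P:redux} and \ref{P:redux2} (pushing the extra point into general position inside a $(p+1)$-dimensional span and invoking the covering lemma) rather than by a one-shot Bezout. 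In Step 2, the pushforward $\pi_\ast T$ under a linear projection with positive-dimensional fibers needs justification (mass and Lelong numbers do not come for free), and, more seriously, the slice $\pi_\ast T\wedge[P_{ijk}]$ --- i.e.\ the restriction of a $(1,1)$ current on $W$ to a $2$-plane through points of large Lelong number --- need not exist: the plane can lie in the polar set, for instance inside a hyperplane charged by the current, and after removing such components the Lelong numbers decrease, which your claim $\nu(\tau_{ijk},x)\ge\nu(T,x)$ ignores. The paper's route through Meo's theorem \cite{Me98}, Demailly regularization \cite{D92}, and restriction as in Lemma \ref{L:Meo}, combined with the calibrated cascade of thresholds down to $2/5$ where \cite[Theorem 1.2]{C06} applies, is what makes all of these steps legitimate; your outline would need to rebuild that machinery to close the gaps.
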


\par Theorem \ref{T:mt3} is proved in Section \ref{S:mt3pf}. We note there that the corresponding statement does not hold for currents of bidimension (1,1). However, we give in Theorem \ref{T:mt31} a geometric description of the set $E^+_{2/5}(T,\mathbb P^n)$ when $T$ is a positive closed current of bidimension $(1,1)$ on $\mathbb P^n$ such that $\|T\|=1$. The proof of Theorem \ref{T:mt31} uses ideas from \cite{D92,Me98,Vig} related to self-intersection inequalities for positive closed currents. Using similar ideas, we further show in Theorem \ref{T:KC} that if $T$ is a positive closed current of bidimension $(p,p)$ on a compact K\"ahler manifold $(X,\omega)$, then the set $E_c(T,X)$, $c>0$, is contained in an analytic set of dimension $\leq p$ whose volume and number of irreducible components are bounded above by a positive constant which depends only on $\|T\|$ and $c$. In this case, the volumes of analytic subvarieties of $X$ and the mass $\|T\|$ are computed with respect to the fixed K\"ahler form $\omega$ on $X$. 

\par In Section \ref{S:multiproj} we study similar geometric properties for upper level sets of positive closed currents of bidimension $(1,1)$ on multi-projective spaces.

\section{Proofs of Theorems \ref{T:mt1} and \ref{T:mt2}}\label{S:mtpf} 

\subsection{Proof of Theorem \ref{T:mt1}} Let us start by recalling some terminology. If $A\subset\mathbb P^n$ we denote by $\Span(A)$ the smallest linear subspace of $\mathbb P^n$ containing $A$. We say that the points $x_1,\dots,x_{k+1}\in\mathbb P^n$, $k\leq n$, are linearly independent if they span a $k$-dimensional linear subspace of $\mathbb P^n$. We say that $k>n+1$ points of $\mathbb P^n$ are in general position if any $n+1$ of them are linearly independent. We will need the following lemma:

\begin{Lemma}\label{L:Meo} Let $T$ be a positive closed current of bidimension $(p,p)$ on $\mathbb P^n$, $0<p<n$, let $\alpha>0$, and let $V$ be a $(p+1)$-dimensional linear subspace of $\mathbb P^n$. There exists a positive closed current $S$ of bidegree $(1,1)$ on $V\equiv\mathbb P^{p+1}$ such that $\|S\|=\|T\|$ and $E^+_\alpha(T,\mathbb P^n)\cap V\subset E^+_\alpha(S,V)$.
\end{Lemma}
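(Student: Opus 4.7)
The plan is to realize $S$ as the direct image of $T$ under a generic linear projection onto $V$. Choose coordinates on $\mathbb P^n$ so that $V = \{z_{p+2}=\cdots=z_n=0\}$, and let $L$ be an $(n-p-2)$-dimensional linear subspace disjoint from $V$, chosen generically so that $L$ is not contained in any of the at most countably many analytic subvarieties on which $T$ has positive Lelong number (Siu's decomposition). Let $\pi : \mathbb P^n \setminus L \to V$ be the linear projection from $L$; since $L \cap V = \emptyset$, $\pi$ restricts to the identity on $V$.

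Next, resolve the indeterminacy of $\pi$: let $\sigma : X \to \mathbb P^n$ be the blow-up of $\mathbb P^n$ along $L$, so that $\widehat\pi := \pi \circ \sigma : X \to V$ extends to a surjective holomorphic submersion which is proper over $V$. Pull $T$ back to $\widehat T := \sigma^* T$ on $X$ (well defined thanks to the genericity of $L$), and define $S := \widehat\pi_* \widehat T$. Since $\widehat\pi$ has relative dimension $n-p-1$ and $\widehat T$ has bidegree $(n-p,n-p)$, $S$ is a positive closed current of bidegree $(1,1)$ on $V \cong \mathbb P^{p+1}$.

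To verify that $\|S\|=\|T\|$, apply the projection formula:
$$\|S\|=\int_V S\wedge\omega_V^p=\int_X\widehat T\wedge(\widehat\pi^*\omega_V)^p.$$
Because $\pi^*\omega_V$ represents $c_1(\mathcal O_{\mathbb P^n}(1))$ in $H^{1,1}(\mathbb P^n)$, so does $\widehat\pi^*\omega_V=\sigma^*\pi^*\omega_V$ in $H^{1,1}(X)$. Replacing $(\widehat\pi^*\omega_V)^p$ by $(\sigma^*\omega_n)^p$ and pushing forward by $\sigma$ gives $\|S\|=\int_{\mathbb P^n}T\wedge\omega_n^p=\|T\|$.

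The main obstacle is establishing the Lelong number inequality $\nu(S,z)\geq\nu(T,z)$ for every $z\in V$. Since $z\notin L$, $\sigma$ is a biholomorphism near $z$ and hence $\nu(\widehat T,\widehat z)=\nu(T,z)$, where $\widehat z=\sigma^{-1}(z)$. Because $\pi|_V=\mathrm{id}_V$, the subvariety $V$ lifts through $\widehat z$ as a local holomorphic section of $\widehat\pi$, so the problem localizes to the model submersion $U\times F\to U$ with $V$ corresponding to the zero section $U\times\{0\}$. Writing $T$ locally as $dd^c$ of a plurisubharmonic potential and fiber-integrating, one shows that the Lelong number of the pushforward at $z$ dominates that of $\widehat T$ at $\widehat z$, which is precisely the estimate proved by M\'eo for direct images of positive closed currents under holomorphic submersions. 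Combining this with the previous steps yields $E^+_\alpha(T,\mathbb P^n)\cap V\subset E^+_\alpha(S,V)$, as desired.
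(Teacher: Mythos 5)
Your plan -- push $T$ forward to $V$ by the linear projection from a generic center $L$ -- is close in spirit to the mechanism behind M\'eo's theorem, but as written it has several genuine gaps, and the paper itself takes a different, much shorter route: it quotes \cite{Me98} to replace $T$ by a bidegree $(1,1)$ current $T'$ with the \emph{same} Lelong numbers and unit mass, regularizes $T'$ by \cite[Proposition 3.7]{D92}, restricts the regularizations to $V$ via their $\omega_n$-plurisubharmonic potentials, and takes a weak limit. The first gap in your argument is the pullback $\widehat T=\sigma^*T$: for a positive closed current of bidimension $(p,p)$ (bidegree $(n-p,n-p)$, not $(1,1)$) there is no general pullback under a blow-up. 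What one can do is pull back by the biholomorphism off the exceptional divisor $E$ and take the trivial extension, but that requires proving local finiteness of mass near $E$ (a statement about generalized Lelong numbers of $T$ with respect to $\log d(\cdot,L)$); ``well defined thanks to the genericity of $L$'' is not an argument, and genericity in the sense of avoiding $E^+_0(T)$ does not by itself give this.

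The second gap is the mass computation. On $X$ the class of $\widehat\pi^*\omega_V$ is $\sigma^*\{\omega_n\}-\{E\}$, not $\sigma^*\{\omega_n\}$: the projection is defined by the linear system of hyperplanes containing $L$, whose pullback to the blow-up acquires a $-E$. Moreover the class of the trivial extension $\widehat T$ need not be $\sigma^*\{T\}$; mass can be lost on $E$. So the identity $\|S\|=\|T\|$ does not follow from the cohomological substitution you make; to recover it you would have to show that, for generic $L$, the intersection $T\wedge(\pi^*\omega_V)^p$ puts no mass on $L$ -- a quantitative genericity statement that is absent. (For the application an inequality $\|S\|\le\|T\|$ would suffice after renormalizing, but even that is not established here.) Finally, the central estimate $\nu(\widehat\pi_*\widehat T,z)\ge\nu(\widehat T,\widehat z)$ is the heart of the lemma, and your sketch for it -- ``writing $T$ locally as $dd^c$ of a plurisubharmonic potential and fiber-integrating'' -- cannot work, since $\widehat T$ is not of bidegree $(1,1)$ and has no local potentials; what you are implicitly invoking is precisely the nontrivial content of M\'eo's self-intersection machinery, i.e.\ essentially the result \cite{Me98} that the paper cites outright. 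So either you prove the direct-image Lelong estimate for proper submersions (including ruling out loss at the fibers over $L$), or you cite it precisely -- at which point the paper's argument, which reduces to the $(1,1)$ case at the start and then only needs restriction of quasi-plurisubharmonic functions, is both simpler and complete.
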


\begin{proof} Assume without loss of generality that $\|T\|=1$. By \cite{Me98}, there exists a positive closed current $T'$ of bidegree $(1,1)$ on $\mathbb P^n$ such that $\|T'\|=1$ and $\nu(T',z)=\nu(T,z)$ for every $z\in\mathbb P^n$. Demailly's regularization theorem \cite[Proposition 3.7]{D92} yields a sequence of positive closed currents $T'_m$ of bidegree $(1,1)$ on $\mathbb P^n$ with $\|T'_m\|=1$, such that each $T'_m$ is smooth outside an analytic subset contained in $E^+_0(T')$ and $\lim_{m\to\infty}\nu(T'_m,x)=\nu(T',x)$ at each $x\in{\mathbb P}^n$. We write $T'_m=\omega_n+dd^c\varphi_m$ for some $\omega_n$-plurisubharmonic function $\varphi_m$ on $\mathbb P^n$ (see e.g. \cite{GZ05}). Note that by \cite{Siu74}, $E^+_0(T')=E^+_0(T)$ is a countable union of analytic subsets of dimension at most $p$, so $V\setminus E^+_0(T')\neq\emptyset$. Since $\varphi_m$ is smooth at each point of $V\setminus E^+_0(T')$ the pull-back $S_m$ of $T'_m$ to $V$, $S_m=\omega_n\vert_{_V}+dd^c(\varphi_m\vert_{_V})$, is a well defined positive closed current of bidegree $(1,1)$ on $V$ with $\|S_m\|=1$. By passing to a subsequence, we may assume that $S_m$ converges weakly to a positive closed current $S$ of bidegree $(1,1)$ on $V$. Then $\|S\|=1$ and 
$$\nu(S,z)\geq\limsup_{m\to\infty}\nu(S_m,z)\geq\lim_{m\to\infty}\nu(T'_m,z)=\nu(T',z)\,,$$
for all $z\in V$. It follows that $E^+_\alpha(T,\mathbb P^n)\cap V\subset E^+_\alpha(S,V)$.
\end{proof}

\begin{proof}[Proof of Theorem \ref{T:mt1}] Assume for a contradiction that $\dim\Span(E^+_{(p+1)/(p+2)}(T,\mathbb P^n))\geq p+1$, so there exist linearly independent points $x_1,\dots,x_{p+2}\in E^+_{(p+1)/(p+2)}(T,\mathbb P^n)$. Let $V\equiv\mathbb P^{p+1}$ be the linear subspace spanned by these points. By Lemma \ref{L:Meo}, there exists a positive closed current $S$ of bidegree $(1,1)$ on $V$ such that $\|S\|=1$ and $x_1,\dots,x_{p+2}\in E^+_{(p+1)/(p+2)}(S,V)$. This is in contradiction to \cite[Proposition 2.2]{CG09}, which shows that $E^+_{(p+1)/(p+2)}(S,V)$ must be contained in a hyperplane of $V$. 
\end{proof}

\subsection{Proof of Theorem \ref{T:mt2}} 

\par We prove first Theorem \ref{T:mt2} for currents of bidegree $(1,1$) on $\mathbb P^n$. This is the contents of Theorem \ref{T:mt4}. Let $\alpha_n=(n-1)/n$. We begin with the following lemma:

\begin{Lemma}\label{L:redux} Let $T$ be a positive closed current of bidegree $(1,1)$ on $\mathbb P^n$ such that $\|T\|=1$ and $E^+_{\alpha_n}(T,\mathbb P^n)$ contains a set $A=\{x_1,\dots,x_{n+1}\}$ of linearly independent points. Then:

\smallskip

(i) For every subset $B\subset A$ with $|B|=k+1$, $k\geq1$, there exists a positive closed current $R_B$ of bidegree $(1,1)$ on $\Span(B)\equiv\mathbb P^k$ such that $\|R_B\|=1$ and $E^+_{\alpha_n}(T,\mathbb P^n)\cap\Span(B)\subset E^+_{\alpha_k}(R_B,\Span(B))$.

\smallskip

(ii) $E^+_{\alpha_n}(T,\mathbb P^n)\subset\bigcup_{1\leq j<k\leq n+1}L_{jk}$, where $L_{jk}$ is the line spanned by $x_j$ and $x_k$. 
\end{Lemma}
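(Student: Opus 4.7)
I prove Lemma \ref{L:redux} in two steps, with (i) modelled on Lemma \ref{L:Meo} and (ii) by induction on $n$.

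For part (i), since $T$ already has bidegree $(1,1)$ I skip Meo's preliminary step. Writing $T=\omega_n+dd^c\varphi$ for an $\omega_n$-psh function $\varphi$, I apply Demailly's regularization to get smooth $\omega_n$-psh approximants $\varphi_m$ with $\nu(\varphi_m,z)\to\nu(\varphi,z)$ pointwise, restrict to $V=\Span(B)\cong\mathbb P^k$, and take a weak subsequential limit of the bidegree $(1,1)$ currents $S_m=\omega_n|_V+dd^c(\varphi_m|_V)$ to obtain $R_B$. A cohomology computation gives $\|S_m\|=\|T\|=1$, preserved in the limit, and $\nu(R_B,z)\ge\nu(T,z)$ on $V$ by the standard inequality for restrictions of psh functions together with upper semi-continuity of Lelong numbers. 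Since $\alpha_k\le\alpha_n$ when $k\le n$, the desired inclusion $E^+_{\alpha_n}(T,\mathbb P^n)\cap V\subset E^+_{\alpha_k}(R_B,V)$ follows. The technical subtlety, absent in Lemma \ref{L:Meo} because there $\dim V$ strictly exceeds $\dim W$ for every component $W$ of $E^+_0(T)$, is that here $V$ may lie entirely in the polar set of $\varphi$; in that degenerate case I would give a direct construction of $R_B$ using Siu's decomposition $T=\sum\lambda_j[H_j]+R$, treating separately the hypersurfaces $H_j$ containing $V$ and those not, and exploiting that the weaker bound $\nu(R_B,z)>\alpha_k$ suffices.

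For part (ii) I induct on $n$. The base case $n=2$ is immediate from \cite[Theorem 1.1]{C06} applied to the current $R_A$ produced by part (i) on $\Span(A)=\mathbb P^2$: three linearly independent points $x_1,x_2,x_3\in E^+_{1/2}(R_A)$ prevent this set from being contained in a line, so the cited theorem forces it to be finite with exactly one exceptional point off some line $L$. The only possible configuration, after relabelling, is $L=L_{12}$ with exceptional point $x_3\in L_{13}$, yielding $E^+_{1/2}(R_A)\subset L_{12}\cup L_{13}\subset\bigcup_{j<k}L_{jk}$. For the inductive step, let $y\in E^+_{\alpha_n}(T,\mathbb P^n)$ with $y\ne x_i$ for every $i$. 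If $y$ lies in some hyperplane $H_{i_0}=\Span(A\setminus\{x_{i_0}\})$, part (i) applied to $B=A\setminus\{x_{i_0}\}$ yields a bidegree $(1,1)$ current $R_B$ on $H_{i_0}\cong\mathbb P^{n-1}$ of unit mass with the $n$ linearly independent points of $B$ lying in $E^+_{\alpha_{n-1}}(R_B)$, so the inductive hypothesis places $y$ on some $L_{jk}$ with $j,k\in B$.

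The main obstacle is the remaining case, where $y\notin H_i$ for every $i$ and $\{y,x_1,\dots,x_{n+1}\}$ is in general position in $\mathbb P^n$. I aim for a contradiction by running the restriction/regularization procedure of part (i) on a line $L_{jk}$: it should yield a unit-mass measure $\mu$ on $L_{jk}\cong\mathbb P^1$ with $\mu(\{z\})\ge\nu(T,z)$ for $z\in L_{jk}$, from which
$$1=\|\mu\|\ge\mu(\{x_j\})+\mu(\{x_k\})\ge\nu(T,x_j)+\nu(T,x_k)>2\alpha_n=\frac{2(n-1)}{n},$$
a contradiction for $n\ge 3$. The delicate step is producing a pair $(j,k)$ for which $L_{jk}$ is not contained in the polar set of $\varphi$, so that the strong Lelong lower bound on $\mu$ is available; I expect to establish this combinatorially, using Siu's decomposition of $T$ together with the mass constraint $\|T\|=1$ to rule out that all $\binom{n+1}{2}$ lines among the $x_i$ simultaneously lie in the singular set of $T$.
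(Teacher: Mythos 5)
There are genuine gaps, one in each part, and in both cases the missing step is exactly where the paper's proof does its real work. In part (i), your main argument covers only the case where $\Span(B)$ meets the regular locus of the approximants, and you defer the degenerate case (the span contained in the polar/charged set) to an unspecified ``direct construction.'' But that degenerate case is the whole point of the lemma, and it is why the conclusion involves $\alpha_k$ rather than $\alpha_n$: the paper first reduces to $k=n-1$, writes $T=a[H]+R$ by Siu's decomposition along $H=\Span(B)$, bounds the charge by $a<1-\alpha_n$ using the point $x_{n+1}\in A\setminus H$ with $\nu(T,x_{n+1})>\alpha_n$ (your sketch never uses the point of $A$ outside $\Span(B)$), and then checks for the normalized residual $R'=R/(1-a)$ that $\nu(R',z)>(\alpha_n-a)/(1-a)>(2\alpha_n-1)/\alpha_n=\alpha_{n-1}$, before regularizing and restricting; the case $k<n-1$ is then obtained by iterating this hyperplane step. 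None of this arithmetic, nor the mechanism for controlling the charge on $\Span(B)$, appears in your plan, so part (i) is not established in the only case that matters.

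The gap in part (ii) is more serious: your Case B strategy cannot be completed. The contradiction you aim for, $1\geq\nu(T,x_j)+\nu(T,x_k)>2\alpha_n$, makes no use of the extra point $y$, so if some line $L_{jk}$ were ``good'' in your sense the same argument would contradict the mere existence of the set $A$ --- but that hypothesis is satisfiable, e.g.\ $T=\frac{1}{n+1}\sum_{j=1}^{n+1}[H_j]$ with $H_j=\Span(A\setminus\{x_j\})$ has $\nu(T,x_j)=n/(n+1)>\alpha_n$ at all $n+1$ vertices. In that example every line $L_{jk}$ lies in the $n-1$ charged hyperplanes $H_l$, $l\neq j,k$, so your hoped-for combinatorial claim (that not all $\binom{n+1}{2}$ lines can lie in the singular set) is simply false; in fact, whenever the hypothesis of the lemma holds with $n\geq3$, \emph{every} line joining two points of $E^+_{\alpha_n}(T)$ must be degenerate for restriction, precisely because otherwise your inequality would apply. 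Moreover, Siu's decomposition cannot be invoked for lines here: for a bidegree $(1,1)$ current the decomposition concerns divisorial components, while $L_{jk}$ has codimension $n-1$, so positive generic Lelong numbers along many lines impose no direct mass cost. The paper avoids all of this by a different device: assuming a point $x_{n+2}$ of $E^+_{\alpha_n}(T)$ lies off all the hyperplanes $H_j$, it picks an auxiliary point $x_0$ with $\nu(T,x_0)=0$ in general position, regularizes $T$ to a current $T'$ with analytic singularities, and intersects $T'$ with the unique rational normal curve $C$ of degree $n$ through the $n+3$ points, getting $n=\int T'\wedge[C]\geq\sum_{j=1}^{n+2}\nu(T',x_j)>(n+2)\alpha_n$, a contradiction; here general position of $y$ enters through the existence of $C$, and the low degree of $C$ relative to the number of points is what makes the count work. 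Your hyperplane case and the descent to lines (your Case A and the base case $n=2$) do mirror the paper's recursion correctly, but without a valid replacement for the rational normal curve step the proof of (ii) does not go through.
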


\begin{proof} $(i)$ It suffices to prove $(i)$ for $k=n-1$. Then we apply this inductively to obtain the result for arbitrary $k$. Assume without loss of generality that $B=\{x_1,\dots,x_n\}$ and let $H$ be the hyperplane spanned by $B$. Siu's decomposition theorem \cite{Siu74} implies that $T=a[H]+R$, where $[H]$ denotes the current of integration along $H$, $0\leq a\leq 1$, and $R$ is a positive closed current of bidegree $(1,1)$ on $\mathbb P^n$ with generic Lelong number 0 along $H$. We have 
$$1-a=\|R\|\geq\nu(R,x_{n+1})=\nu(T,x_{n+1})>\alpha_n\,,\;\text{ so }\,a<1-\alpha_n\,.$$
The current $R'=R/(1-a)$ has mass $\|R'\|=1$, generic Lelong number 0 along $H$, and if $z\in E^+_{\alpha_n}(T,\mathbb P^n)\cap H$ then, since $a<1-\alpha_n$, 
\begin{equation}\label{e:rec}
\nu(R',z)=\frac{\nu(T,z)-a}{1-a}>\frac{\alpha_n-a}{1-a}>\frac{2\alpha_n-1}{\alpha_n}=\alpha_{n-1}\,.
\end{equation}
By \cite[Proposition 3.7]{D92} there exists a sequence of positive closed currents $R'_m$ of bidegree $(1,1)$ on $\mathbb P^n$ with analytic singularities, such that $\|R'_m\|=1$, $\nu(R'_m,x)\leq\nu(R',x)$ and $\lim_{m\to\infty}\nu(R'_m,x)=\nu(R',x)$ for all $x\in\mathbb P^n$. It follows that $R'_m$ is smooth at each point of $H$ outside an analytic subset of $H$, so the pull-back $R'_m\vert_{_H}$ of $R'_m$ to $H$ is well-defined. Arguing as in the proof of Lemma \ref{L:Meo} we obtain the current $R_B$ that verifies the desired properties as a weak limit point of $\{R'_m\vert_{_H}\}$.

\par $(ii)$ Let $H_j$ denote the hyperplane spanned by $A\setminus\{x_j\}$. We show first that 
\begin{equation}\label{e:redux}
E^+_{\alpha_n}(T,\mathbb P^n)\subset\bigcup_{j=1}^{n+1}H_j\,.
\end{equation}
Assume that there exists $x_{n+2}\in E^+_{\alpha_n}(T,\mathbb P^n)\setminus\bigcup_{j=1}^{n+1}H_j$ and choose $x_0\in\mathbb P^n\setminus\{x_1,\dots,x_{n+2}\}$ so that the points $x_0,\dots,x_{n+2}$ are in general position and $\nu(T,x_0)=0$. By \cite[Proposition 3.7]{D92} there exists a a positive closed current $T'$ of bidegree $(1,1)$ on $\mathbb P^n$ with analytic singularities, such that $\|T'\|=1$, $\nu(T',x_j)>\alpha_n$, $j=1,\ldots,n+2$, and $T'$ is smooth near $x_0$. Let $C$ be the unique rational normal curve passing through the points $x_0,\dots,x_{n+2}$ (see \cite[p. 530]{GH}). It follows by \cite{D93} and \cite{FS95} that the measure $T'\wedge[C]$ is well defined, where $[C]$ denotes the current of integration along $C$. Since $C$ has degree $n$ and using \cite[Corollary 5.10]{D93}, we obtain
\begin{equation}\label{e:rnc}
n=\int_{\mathbb P^n}T'\wedge[C]\geq\sum_{j=1}^{n+2}T'\wedge[C](\{x_j\})\geq\sum_{j=1}^{n+2}\nu(T',x_j)\,\nu([C],x_j)>(n+2)\,\alpha_n\;,
\end{equation}
a contradiction. This proves \eqref{e:redux}. 

\par Let now $B_j=A\setminus\{x_j\}$. Applying \eqref{e:redux} to the current $R_{B_j}$ given by $(i)$ we obtain 
$$E^+_{\alpha_n}(T,\mathbb P^n)\cap H_j\subset E^+_{\alpha_{n-1}}(R_{B_j},H_j)\subset \bigcup_{k=1,k\neq j}^{n+1}\Span(A\setminus\{x_j,x_k\})\,.$$
Together with \eqref{e:redux} this implies that 
$$E^+_{\alpha_n}(T,\mathbb P^n)\subset \bigcup_{1\leq j<k\leq n+1}\Span(A\setminus\{x_j,x_k\})\,.$$
Repeating this argument inductively yields $(ii)$.
\end{proof}

\begin{Theorem}\label{T:mt4} If $T$ is a positive closed current of bidegree $(1,1)$ on $\mathbb P^n$ with $\|T\|=1$, then the set $E^+_{\alpha_n}(T,\mathbb P^n)$ is either contained a hyperplane or else it is a finite set and $|E^+_{\alpha_n}(T,\mathbb P^n)\setminus L|=n-1$ for some line $L\subset\mathbb P^n$. 
\end{Theorem}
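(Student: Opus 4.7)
The plan is induction on $n$, with base case $n=2$ being \cite[Theorem 1.1]{C06} (for $p=1$; in $\mathbb{P}^2$ hyperplanes are lines and $\alpha_2 = 1/2$). Assume the theorem on $\mathbb{P}^{n-1}$ and let $T$ be as in the statement with $E^+_{\alpha_n}(T, \mathbb{P}^n)$ not contained in a hyperplane. Choose $n+1$ linearly independent points $A = \{x_1, \ldots, x_{n+1}\} \subset E^+_{\alpha_n}(T)$ and set $H_j = \Span(A \setminus \{x_j\})$.

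I first establish finiteness. For each $j$, Lemma \ref{L:redux}(i) applied with $B = A \setminus \{x_j\}$ (so $k = n-1$) produces a positive closed current $R_{B_j}$ of bidegree $(1,1)$ on $H_j \cong \mathbb{P}^{n-1}$ of unit mass such that $E^+_{\alpha_n}(T) \cap H_j \subset E^+_{\alpha_{n-1}}(R_{B_j}, H_j)$. Since $B_j$ is a linearly independent spanning set of $H_j$, the latter set is not contained in a hyperplane of $H_j$; by the inductive hypothesis it is finite with exactly $n-2$ points off some line $L_j^* \subset H_j$. Combined with Lemma \ref{L:redux}(ii), which gives $E^+_{\alpha_n}(T) \subset \bigcup_j H_j$, this shows $E^+_{\alpha_n}(T)$ is finite.

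Next I extract the per-hyperplane structure. A line in $H_j$ meets the $n$ linearly independent points of $B_j$ in at most $2$ points; since $E^+_{\alpha_{n-1}}(R_{B_j}, H_j) \supset B_j$ and has exactly $n-2$ points off $L_j^*$, exactly two $B_j$-points must lie on $L_j^*$. Hence $L_j^* = L_{p_j q_j}$ for some $p_j, q_j \in \{1,\ldots,n+1\} \setminus \{j\}$, and the $n-2$ off-line exceptional points in $H_j$ coincide with $\{x_r : r \neq j, p_j, q_j\}$. The decisive step is to show, after relabeling, that $(p_j, q_j) = (1,2)$ for all $j \geq 3$; one then takes $L := L_{12}$ as the global main line and verifies $E^+_{\alpha_n}(T) \setminus L_{12} = \{x_3, \ldots, x_{n+1}\}$.

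The main obstacle is precisely this uniformity claim. My plan is to apply Lemma \ref{L:redux}(i) with $|B| = 3$ (so $k = 2$, threshold $\alpha_2 = 1/2 < \alpha_n$) to every plane $\Span(x_i, x_j, x_k)$: the base case then forces the high-Lelong set of $T$ in the plane to lie in a line together with one exceptional point chosen from $\{x_i, x_j, x_k\}$. Cross-referencing these plane-level constraints with the per-hyperplane structure along the codimension-$2$ intersections $H_j \cap H_{j'} = \Span(A \setminus \{x_j, x_{j'}\})$ (which share $n-1$ of the $A$-points) pins down the common pair $(p_j, q_j) \equiv (1,2)$ for $j \geq 3$; an analogous plane-level argument then controls $H_1$ and $H_2$, showing that any high-Lelong point in $H_i \setminus L_{12}$ must coincide with one of $\{x_3, \ldots, x_{n+1}\}$. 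Coordinating these plane and hyperplane constraints across all triples and all hyperplanes is the combinatorial heart of the proof.
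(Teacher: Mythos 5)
Your setup is sound as far as it goes: the induction with base case \cite[Theorem 1.1]{C06}, the use of Lemma \ref{L:redux}(i)--(ii) to get finiteness, and the per-hyperplane structure (exactly two points of $B_j$ on the exceptional line $L_j^*$, so $L_j^*=L_{p_jq_j}$ and $E^+_{\alpha_n}(T)\cap H_j\subset B_j\cup L_{p_jq_j}$) are all correct deductions. But the ``decisive step'' — forcing a single common line $L_{12}$ — is exactly where the theorem's real content lies, and you have only sketched a plan for it, not a proof. Worse, the plan as described cannot work at the first inductive step $n=3$. Consider the configuration $A=\{x_1,\dots,x_4\}$ together with two extra points $x\in L_{12}\setminus A$ and $y\in L_{34}\setminus A$ (note $L_{12}$ and $L_{34}$ are skew). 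Every plane $\Span(B)$ with $B\subset A$, $|B|=3$, contains at most one of $x,y$, so each plane-level application of Lemma \ref{L:redux}(i) and \cite[Theorem 1.1]{C06} is satisfied (one line through two of the $x_j$'s plus one exceptional point), and Lemma \ref{L:redux}(ii) is satisfied as well since $x,y$ lie on lines $L_{jk}$. For $n\geq4$ a fifth point $x_5$ exists and the hyperplane $H_5\supset\{x_1,\dots,x_4,x,y\}$ would indeed kill this configuration via your per-hyperplane structure, but at $n=3$ there is no such hyperplane, and no amount of cross-referencing the constraints you propose can exclude it — yet it must be excluded, since your induction from $n=2$ to $n=3$ (and hence all later steps) depends on it.

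This is precisely the point where the paper introduces a new analytic ingredient rather than a combinatorial one: working on $\Span(x_1,\dots,x_4)\equiv\mathbb P^3$ with the current $R=R_B$ of Lemma \ref{L:redux}, it writes the Siu decomposition $R=a[V_1]+b[V_3]+R'$ with $V_1=\Span(x_2,x_3,x_4)$, $V_3=\Span(x_1,x_2,x_4)$, regularizes $R'$ by \cite[Proposition 3.7]{D92}, and intersects the regularized current with two irreducible conics $C_1\subset V_1$ through $x_2,x_3,y$ and $C_3\subset V_3$ through $x_1,x_4,x$ (each also through a point of zero Lelong number). The resulting mass inequality $4(1-a-b)\geq\nu(S,x)+\nu(S,y)+\sum_j\nu(S,x_j)>4-4a-4b$ is the contradiction that rules out the skew-line configuration; the cases $y\in L_{1k}$ or $y\in L_{2k}$ are the only ones handled by \cite[Theorem 1.1]{C06} in a plane, as in your plan. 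Without this (or some substitute intersection-theoretic argument), your proposal does not close, so there is a genuine gap at its combinatorial heart.
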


\begin{proof} If $E^+_{\alpha_n}(T,\mathbb P^n)$ is not contained in a hyperplane then $\dim\Span(E^+_{\alpha_n}(T,\mathbb P^n))=n$ and there exists a set of linearly independent points $A=\{x_1,\dots,x_{n+1}\}\subset E^+_{\alpha_n}(T,\mathbb P^n)$. Let $L_{jk}$ denote the line spanned by $x_j,x_k$. 

\par If $E^+_{\alpha_n}(T,\mathbb P^n)=A$ then $|E^+_{\alpha_n}(T,\mathbb P^n)\setminus L_{12}|=n-1$ and we are done. Suppose that there exists $x\in E^+_{\alpha_n}(T,\mathbb P^n)\setminus A$. By Lemma \ref{L:redux} we have, after relabeling points if necessary, that $x\in L_{12}$. 

\par We show that $E^+_{\alpha_n}(T,\mathbb P^n)\subset A\cup L_{12}$. Assume for a contradiction that there exists $y\in E^+_{\alpha_n}(T,\mathbb P^n)\setminus(A\cup L_{12})$. Then $y\in L_{jk}$ for some $3\leq j<k\leq n+1$. Indeed, if $y\in L_{1k}$ or if $y\in L_{2k}$, $k\geq 3$, then let $B=\{1,2,k\}$ and $R_B$ be the current on $\Span(B)\equiv\mathbb P^2$ provided by Lemma \ref{L:redux}. Then $\{x,y,x_1,x_2,x_k\}\subset E^+_{1/2}(R_B,\mathbb P^2)$, and the set $\{x,y,x_1,x_2,x_k\}$ has at least two points outside each complex line. This is in contradiction to \cite[Theorem 1.1]{C06}. Hence after relabeling points if necessary we have that $y\in L_{34}$. 

\par Consider now the set $B=\{x_1,x_2,x_3,x_4\}$ and the current $R=R_B$ on $\Span(B)\equiv\mathbb P^3$ given by Lemma \ref{L:redux}, so $\{x,y,x_1,x_2,x_3,x_4\}\subset E^+_{2/3}(R,\mathbb P^3)$. If $V_1=\Span(\{x_2,x_3,x_4\})$, $V_3=\Span(\{x_1,x_2,x_4\})$, we write, using \cite{Siu74}, $R=a[V_1]+b[V_3]+R'$, where $R'$ has generic Lelong number 0 on $V_1\cup V_3$, $\|R'\|=1-a-b$, and 
\begin{eqnarray*}
&&\nu(R',x_1)>\frac{2}{3}-b\,,\;\nu(R',x_3)>\frac{2}{3}-a\,,\;\nu(R',x_j)>\frac{2}{3}-a-b\,,\;j=2,4\,,\\
&&\nu(R',x)>\frac{2}{3}-b\,,\;\nu(R',y)>\frac{2}{3}-a\,.
\end{eqnarray*}
Note that $a+b<1$. By \cite[Proposition 3.7]{D92} there exists a positive closed current with analytic singularities $S$ of bidegree $(1,1)$ on $\mathbb P^3$ with $\|S\|=1-a-b$ and such that the Lelong numbers of $S$ satisfy the same inequalities as those of $R'$ at the points $x,y,x_1,x_2,x_3,x_4$. Moreover, $S$ is smooth at each point where $R'$ has 0 Lelong number. Let $C_1$ be an irreducible conic in $V_1$ passing through $x_2,x_3,y$ and a point $w_1\in V_1$ where $\nu(R',w_1)=0$. Let $C_3$ be an irreducible conic in $V_3$ passing through $x_1,x_4,x$ and a point $w_3\in V_3$ where $\nu(R',w_3)=0$. Then the measures $S\wedge[C_j]$, $j=1,3$, are well defined and 
$$4(1-a-b)=\int_{\mathbb P^3}S\wedge([C_1]+[C_3])\geq\nu(S,x)+\nu(S,y)+\sum_{j=1}^4\nu(S,x_j)>4-4a-4b\,,$$
a contradiction. 

\par We conclude that $E^+_{\alpha_n}(T,\mathbb P^n)\subset A\cup L_{12}$, hence $|E^+_{\alpha_n}(T,\mathbb P^n)\setminus L_{12}|=n-1$. If $B=\{x_1,x_2,x_3\}$ and $R_B$ is the current on $\Span(B)\equiv\mathbb P^2$ given by Lemma \ref{L:redux} then $E^+_{\alpha_n}(T,\mathbb P^n)\cap L_{12}\subset E^+_{1/2}(R_B,\mathbb P^2)$. By \cite[Theorem 1.1]{C06}, the set $E^+_{1/2}(R_B,\mathbb P^2)$ is finite since it is not contained in a complex line. It follows that $E^+_{\alpha_n}(T,\mathbb P^n)$ is a finite set.
\end{proof}

\medskip

\par Theorem \ref{T:mt2} for arbitrary $p$ follows at once from Theorem \ref{T:mt4} and the next proposition.

\begin{Proposition}\label{P:redux} Let $T$ be a positive closed current of bidimension $(p,p)$ on $\mathbb P^n$, $0<p<n-1$, with $\|T\|=1$ and such that $E^+_{p/(p+1)}(T,\mathbb P^n)$ is not contained in a $p$-dimensional linear subspace of $\mathbb P^n$. If $W=\Span(E^+_{p/(p+1)}(T,\mathbb P^n))$ then $\dim W=p+1$ and there exists a positive closed current $R$ of bidegree $(1,1)$ on $W\equiv\mathbb P^{p+1}$ such that $\|R\|=1$ and $E^+_{p/(p+1)}(T,\mathbb P^n)\subset E^+_{p/(p+1)}(R,W)$.
\end{Proposition}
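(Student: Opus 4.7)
The plan. Observe first that once $\dim W=p+1$ has been established, the current $R$ is immediately produced by Lemma \ref{L:Meo} applied with $V=W\equiv\mathbb P^{p+1}$: it yields a positive closed current of bidegree $(1,1)$ on $W$ with $\|R\|=1$ and $E^+_{p/(p+1)}(T,\mathbb P^n)\subset E^+_{p/(p+1)}(R,W)$. Since $E^+_{p/(p+1)}(T,\mathbb P^n)$ is by hypothesis not contained in any $p$-dimensional linear subspace we already have $\dim W\geq p+1$, so the entire content of the proposition lies in ruling out $\dim W\geq p+2$.

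I argue by contradiction: suppose $E^+_{p/(p+1)}(T,\mathbb P^n)$ contains $p+3$ linearly independent points $x_1,\ldots,x_{p+3}$. The idea is to construct an effective cycle $Z$ of dimension $n-p$ in $\mathbb P^n$ through the $x_j$ so that the top-degree measure $T\wedge[Z]$ on $\mathbb P^n$ carries too much mass at these points. Because $p\leq n-2$ we have $n-p\geq 2$, hence any three of the $x_j$ span a $2$-plane which lies in some $(n-p)$-dimensional linear subspace of $\mathbb P^n$. Cover $\{x_1,\ldots,x_{p+3}\}$ by $k=\lceil(p+3)/3\rceil$ (possibly overlapping) triples, and for each triple choose a generic $(n-p)$-dimensional linear subspace $L_i\subset\mathbb P^n$ containing those three points and none of the other $x_j$. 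Setting $Z=[L_1]+\cdots+[L_k]$ gives $\deg Z=k$ and, writing $m_j=\nu([Z],x_j)=\#\{i:x_j\in L_i\}\geq 1$, one has $\sum_{j=1}^{p+3}m_j=3k$.

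The crux of the argument, and its main obstacle, is to justify that $T\wedge[L_i]$ is a well-defined positive measure on $\mathbb P^n$ of total mass $\|T\|\deg L_i=1$ and that Demailly's comparison $\nu(T\wedge[L_i],x)\geq\nu(T,x)\nu([L_i],x)$ holds at each $x_j$. This is handled by approximating $T$ by positive closed currents of bidimension $(p,p)$ with analytic singularities and passing to the limit, in the spirit of Lemma \ref{L:redux}(ii) and using \cite{D92,D93,FS95}; by choosing the $L_i$ generically one arranges that they meet the singular loci of the approximants in proper codimension, so the wedge products are well-defined measures and Demailly's inequality is valid. Summing over $i$ and $j$, and using $\nu(T,x_j)>p/(p+1)$, yields
\begin{equation*}
k=\int_{\mathbb P^n}T\wedge[Z]\geq\sum_{j=1}^{p+3}\nu(T,x_j)\,m_j>\frac{p}{p+1}\sum_{j=1}^{p+3}m_j=\frac{3kp}{p+1},
\end{equation*}
i.e. $p+1>3p$, whence $p<1/2$, contradicting $p\geq 1$. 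Therefore $\dim W=p+1$ and the proposition is proved.
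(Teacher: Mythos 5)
Your reduction of the problem to ruling out $\dim W\geq p+2$, and the observation that Lemma \ref{L:Meo} then immediately supplies $R$, are both fine and agree with the paper. The gap is exactly at the step you call the crux, and it cannot be repaired by genericity. First, the approximation you invoke does not exist: Demailly's regularization theorem \cite{D92} produces $(1,1)$-currents with analytic singularities, and there is no analogue for positive closed currents of bidimension $(p,p)$, $p<n-1$, that controls Lelong numbers and the cohomology class. This is precisely why the paper never wedges the original $T$ against anything: it first replaces $T$ by a $(1,1)$-current with the same Lelong numbers (Meo \cite{Me98}), regularizes that, and restricts to linear subspaces (Lemma \ref{L:Meo}), so that the only intersections ever formed are of a $(1,1)$-current with a curve, where \cite{D93,FS95} apply.

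Second, even granting some approximation scheme, the genericity claim fails: the requirement that $L_i$ contain the $2$-plane $P$ spanned by a triple of the $x_j$ is incompatible with avoiding the part of $T$ charged on positive-dimensional sets. Nothing in your hypotheses prevents $T$ from containing a piece $c[Y]$ with $Y$ a $p$-dimensional variety containing $P$; then every $(n-p)$-plane $L_i\supset P$ meets $Y$ in excess dimension, and $T\wedge[L_i]$ cannot be a measure of mass $1$ satisfying the comparison inequality. Indeed, your estimate, if valid, would show that no three points can carry Lelong numbers of a unit-mass current summing to more than $1$; but the paper's example $T_1=\frac{1}{p+2}\sum_{j=1}^{p+2}[V_j]$ has $p+2$ points of Lelong number $(p+1)/(p+2)$, any three of which sum to $3(p+1)/(p+2)>1$, and every admissible $L_i$ through such a triple contains subspaces charged by $T_1$ (all $V_j$ with $j$ outside the triple contain $P$; for $p=1$ even $V_j\subset P\subset L_i$), so the wedge is ill-defined exactly where you need it. Making an argument of this type work would require first removing charged components via Siu decomposition and redoing the mass and Lelong-number bookkeeping; this is essentially what the paper does, but at the level of $(1,1)$-currents (Lemma \ref{L:redux}, after the reduction by Lemma \ref{L:Meo}), together with the push-forward by degenerating automorphisms that places a point of $E^+_{p/(p+1)}$ in general position inside $\Span(x_1,\dots,x_{p+2})$.
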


\begin{proof} By hypothesis $\dim W\geq p+1$. Assume for a contradiction that there exist linearly independent points $x_1,\dots,x_{p+3}\in E^+_{p/(p+1)}(T,\mathbb P^n)$. Let $U=\Span(\{x_1,\dots,x_{p+2}\})$ and pick $y\in U$ so that the points $x_1,\dots,x_{p+2},y$ are in general position in $U\equiv\mathbb P^{p+1}$. We will construct a positive closed current $S$ of bidegree $(1,1)$ on $U$ such that $\|S\|=1$ and $\{x_1,\dots,x_{p+2},y\}\subset E^+_{p/(p+1)}(S,U)$. By Lemma \ref{L:redux} $(ii)$, $y$ must lie in a line spanned by some $x_j,x_k$, $1\leq j<k\leq p+2$. This contradicts the fact that the points $x_1,\dots,x_{p+2},y$ are in general position in $U$. The construction of $S$ is as follows. Choose a sequence of points $y_m\in W\setminus U$ such that $y_m\to y$. Then the points $x_1,\dots,x_{p+2},y_m$ are linearly independent. Let $F_m$ be an automorphism of $\mathbb P^n$ such that $F_m(x_j)=x_j$, $1\leq j\leq p+2$, $F_m(x_{p+3})=y_m$ and set $T_m=(F_m)_\star T$. These are positive closed currents of bidimension $(p,p)$ on $\mathbb P^n$ with $\|T_m\|=1$ and $\nu(T_m,x_j)=\nu(T,x_j)$, $1\leq j\leq p+2$, $\nu(T_m,y_m)=\nu(T,x_{p+3})$. By passing to a subsequence we may assume that $T_m$ coverge weakly to a current $T'$. Then $\|T'\|=1$ and by \cite{D93}, 
\begin{eqnarray*}
\nu(T',x_j)&\geq&\limsup_{m\to\infty}\nu(T_m,x_j)=\nu(T,x_j)>\frac{p}{p+1}\,,\;1\leq j\leq p+2\,,\\
\nu(T',y)&\geq&\limsup_{m\to\infty}\nu(T_m,y_m)=\nu(T,x_{p+3})>\frac{p}{p+1}\,.
\end{eqnarray*}
Now Lemma \ref{L:Meo} applied to $T'$ and $U$ with $\alpha=p/(p+1)$ yields the desired current $S$. 

\medskip

\par Hence we have shown that $\dim W=p+1$. Lemma \ref{L:Meo} yields a positive closed current $R$ of bidegree $(1,1)$ on $W\equiv\mathbb P^{p+1}$ such that $\|R\|=1$ and $E^+_{p/(p+1)}(T,\mathbb P^n)=E^+_{p/(p+1)}(T,\mathbb P^n)\cap W\subset E^+_{p/(p+1)}(R,W)$ and the proposition is proved.
\end{proof}

\subsection{Remarks} We start with some examples showing that Theorems \ref{T:mt1} and \ref{T:mt2} are sharp. Let $0<p<n$ and $A=\{x_1,\dots,x_{p+2}\}$ be a set of linearly independent points of $\mathbb P^n$. We set $V_j=\Span(A\setminus\{x_j\})$ and denote by $L_{jk}$ the line spanned by $x_j,x_k$. 

\par Let $T_1=\frac{1}{p+2}\,\sum_{j=1}^{p+2}\,[V_j]\,$. Then $\|T_1\|=1$ and $E_{(p+1)/(p+2)}(T_1,\mathbb P^n)=A$ is not contained in a $p$-dimensional linear subspace of $\mathbb P^n$, so the value $\alpha=(p+1)/(p+2)$ in Theorem \ref{T:mt1} is sharp. 

\par If $p=1$ the value $\alpha=1/2$ in Theorem \ref{T:mt2} was shown to be sharp in \cite{C06}. Assume that $2\leq p\leq n-1$, choose points $x\in L_{12}\setminus A$, $y\in L_{34}\setminus A$, and let $V_x$, resp. $V_y$, denote the $p$-dimensional linear subspace of $\mathbb P^n$ spanned by $(A\cup\{x\})\setminus\{x_1,x_2\}$, resp. by $(A\cup\{y\})\setminus\{x_3,x_4\}$. Note that 
\begin{eqnarray*}
&&\{x,y\}\subset V_x\cap V_y\cap V_j \text{ for } j\geq 5\,,\;\{x_1,x_2\}\subset V_y\setminus V_x\,,\;\{x_3,x_4\}\subset V_x\setminus V_y\,,\\
&&x\in(V_3\cap V_4)\setminus(V_1\cup V_2)\,,\;y\in(V_1\cap V_2)\setminus(V_3\cup V_4)\,.
\end{eqnarray*}
It follows that the bidimension $(p,p)$ current 
$$T_2=\frac{1}{2(p+1)}\,\left(\sum_{j=1}^4\,[V_j]+[V_x]+[V_y]\right)+\frac{1}{p+1}\,\sum_{j=5}^{p+2}\,[V_j]$$
has mass $\|T_2\|=1$ and $\nu(T_2,x)=\nu(T_2,y)=\nu(T_2,x_j)=p/(p+1)$, $1\leq j\leq p+2$. Thus $E_{p/(p+1)}(T_2,\mathbb P^n)\supset A\cup\{x,y\}$, so $|E_{p/(p+1)}(T_2,\mathbb P^n)\setminus L|\geq p+1$ for every line $L\subset\mathbb P^n$. Hence the value $\alpha=p/(p+1)$ in Theorem \ref{T:mt2} is sharp.

\par One can construct a positive closed current $T_3$ of bidimension $(p,p)$ on $\mathbb P^n$ with $\|T_3\|=1$, for which 
$E^+_{p/(p+1)}(T_3,\mathbb P^n)$ is a countable union of linear subspaces of dimension at most $p-1$ contained in a $p$-dimensional linear subspace of $\mathbb P^n$. Indeed, let $V,\,V_j$, $j\geq1$, be distinct $p$-dimensional linear subspaces of $\mathbb P^n$ such that $V\cap V_j\neq\emptyset$ for all $j$, and set 
$$T_3=\frac{p}{p+1}\,[V]+\frac{1}{p+1}\,\sum_{j=1}^\infty\,2^{-j}\,[V_j]\,.$$ 

\par Finally, given any $k\geq2$, one can construct a positive closed current $T_4$ of bidimension $(p,p)$ on $\mathbb P^n$ such that $\|T_4\|=1$, $|E^+_{p/(p+1)}(T_4,\mathbb P^n)\setminus L|=p$ and $|E^+_{p/(p+1)}(T_4,\mathbb P^n)\cap L|=k$, for some line $L$. Indeed, pick distinct points $y_j\in L_{12}\setminus\{x_1,x_2\}$, $1\leq j\leq k-2$, and let $W_j=\Span(\{y_j,x_3,\dots,x_{p+2}\})$. If $0<\varepsilon<\frac{1}{k-1}$ let 
$$T_4=\frac{p-\varepsilon}{p(p+1)}\,\sum_{j=3}^{p+2}\,[V_j]+\frac{1+\varepsilon}{k(p+1)}\left([V_1]+[V_2]+\sum_{j=1}^{k-2}\,[W_j]\right)\,.$$
Then $\|T_4\|=1$ and 
$$\nu(T_4,x_1)=\nu(T_4,x_2)=\nu(T_4,y_j)=\frac{p-\varepsilon}{p+1}+\frac{1+\varepsilon}{k(p+1)}=\frac{p}{p+1}+\frac{1-(k-1)\varepsilon}{k(p+1)}>\frac{p}{p+1}\,,$$
$$\nu(T_4,x_j)=\frac{(p-1)(p-\varepsilon)}{p(p+1)}+\frac{1+\varepsilon}{p+1}=\frac{p}{p+1}+\frac{\varepsilon}{p(p+1)}>\frac{p}{p+1}\,,\;j\geq3\,.$$
Hence $T_4$ satisfies the desired properties with $L=L_{12}$. 

\medskip

\par We conclude this section by showing the following property of the set $E^+_{p/(p+1)}(T,\mathbb P^n)$:

\begin{Proposition}\label{P:charge}
Let $T$ be a positive closed current of bidimension $(p,p)$ on $\mathbb P^n$, $0<p<n$, with $\|T\|=1$, such that the set $E^+_{p/(p+1)}(T,\mathbb P^n)$ contains the linearly independent points $x_1,\dots,x_{p+1}$. If $V=\Span(\{x_1,\dots,x_{p+1}\})$ and $c$ is the generic Lelong number of $T$ along $V$ then $c>0$. 
\end{Proposition}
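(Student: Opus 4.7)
The plan is to argue by contradiction: assume $c=0$ and reduce the problem to a positive closed bidegree $(1,1)$ current on $V\equiv\mathbb P^p$, on which the Lelong number hypothesis will force a degree violation against a rational normal curve through the $x_j$.

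First I would apply Meo's theorem \cite{Me98} to obtain a positive closed $(1,1)$ current $T'$ on $\mathbb P^n$ with $\|T'\|=1$ and $\nu(T',z)=\nu(T,z)$ for every $z$; in particular $\nu(T',x_j)>p/(p+1)$ for $j=1,\dots,p+1$, and the generic Lelong number of $T'$ along $V$ is also $0$. Since the coefficient of $[V]$ in the Siu decomposition of $T$ vanishes, $V$ is not a $p$-dimensional component of $E^+_0(T)=E^+_0(T')$; because $V$ is irreducible of dimension $p$ and $E^+_0(T')$ is a countable union of analytic subsets of dimension $\le p$, a Baire category argument yields $V\not\subset E^+_0(T')$. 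I would then invoke Demailly's regularization \cite[Proposition 3.7]{D92} to produce currents $T'_m=\omega_n+dd^c\varphi_m$ of mass $1$ with analytic singularities, smooth off an analytic subset of $E^+_0(T')$, and $\nu(T'_m,z)\to\nu(T',z)$ for every $z$. Since $V$ is not contained in the singular loci of the $T'_m$, the pull-back $S_m=\omega_n\vert_V+dd^c(\varphi_m\vert_V)$ is a well-defined positive closed bidegree $(1,1)$ current of mass $1$ on $V\equiv\mathbb P^p$, and the standard restriction inequality for Lelong numbers of $\omega$-plurisubharmonic functions gives $\nu(S_m,x_j)\ge\nu(T'_m,x_j)$.

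After extracting a weak limit $S=\lim S_m$, I obtain a positive closed $(1,1)$ current on $\mathbb P^p$ with $\|S\|=1$ and, by upper semicontinuity of Lelong numbers under weak convergence, $\nu(S,x_j)\ge\nu(T,x_j)>p/(p+1)$ for every $j$. A second application of Demailly's regularization, this time on $\mathbb P^p$, produces currents $S_k$ of mass $1$ with analytic singularities satisfying $\nu(S_k,x_j)\to\nu(S,x_j)$, so $\nu(S_k,x_j)>p/(p+1)$ for every $j$ once $k$ is large. The unbounded locus of $S_k$ is a proper analytic subset of $\mathbb P^p$, so I can choose $z_0,z_1\in\mathbb P^p$ off this locus with $x_1,\dots,x_{p+1},z_0,z_1$ in general position; by \cite[p.~530]{GH} there is a unique rational normal curve $C$ of degree $p$ through these $p+3$ points, and $C$ is not contained in the unbounded locus of $S_k$ because it passes through $z_0$. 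Consequently $S_k\wedge[C]$ is a well-defined positive measure of total mass $\deg(C)\cdot\|S_k\|=p$, and \cite[Corollary 5.10]{D93} yields
\[
p=\int_{\mathbb P^p}S_k\wedge[C]\ge\sum_{j=1}^{p+1}\nu(S_k,x_j)\,\nu([C],x_j)=\sum_{j=1}^{p+1}\nu(S_k,x_j)>(p+1)\cdot\frac{p}{p+1}=p,
\]
a contradiction.

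The main obstacle I anticipate is the careful bookkeeping required for the restriction procedure at the critical dimension $p$, rather than the dimension $p+1$ used in Lemma \ref{L:Meo}. The hypothesis $c=0$ is used precisely to guarantee $V\not\subset E^+_0(T)$ so that the pullbacks $S_m$ are defined; the strict inequality $\nu(\cdot,x_j)>p/(p+1)$ then has to survive two successive weak limits, and a rational normal curve through the $x_j$ must be chosen transverse to the unbounded locus of the final regularization $S_k$ so that the intersection with $[C]$ is a bona fide positive measure.
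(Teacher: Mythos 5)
Your proof is correct, and its first half (Meo's theorem, the Baire-type observation that $c=0$ forces $V\not\subset E^+_0(T')$, Demailly regularization, restriction of the potentials to $V$, and passage to a weak limit $S$ on $V\equiv\mathbb P^p$ with $\nu(S,x_j)>p/(p+1)$) is exactly the paper's reduction, which is carried out "as in the proof of Lemma \ref{L:Meo}". Where you diverge is the endgame: the paper stops here and simply observes that a unit-mass bidegree $(1,1)$ current on $\mathbb P^p$ whose set $E^+_{p/(p+1)}$ contains $p+1$ linearly independent points contradicts Theorem \ref{T:mt1} (equivalently \cite[Proposition 2.2]{CG09}), since those points cannot lie in a hyperplane of $V$. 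You instead re-prove the needed case of that hyperplane statement from scratch: a second regularization on $\mathbb P^p$, a choice of auxiliary points $z_0,z_1$ in general position off the unbounded locus, the unique rational normal curve $C$ of degree $p$ through the $p+3$ points, and the degree/Lelong-number comparison $p=\int S_k\wedge[C]\geq\sum_j\nu(S_k,x_j)>p$. This is valid (it is the same device the paper uses in Lemma \ref{L:redux}(ii), inequality \eqref{e:rnc}), and it buys self-containedness: you do not need \cite[Proposition 2.2]{CG09} or Theorem \ref{T:mt1} as a black box, only Demailly's comparison theorem and the classical fact about rational normal curves. The cost is extra bookkeeping (the second regularization, checking $\nu(S_k,x_j)>p/(p+1)$ for large $k$ via $\nu(S_k,\cdot)\leq\nu(S,\cdot)$ and convergence, and general position of $z_0,z_1$), all of which you handle correctly; only note that for $p=1$ the rational-normal-curve language degenerates ($C$ is the line $V$ itself and $S_k\wedge[C]$ is just the measure $S_k$), though the contradiction still goes through trivially there.
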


\begin{proof} Assume that $c=0$. Applying \cite{Me98} and \cite[Proposition 3.7]{D92} as in the proof of Lemma \ref{L:Meo} we obtain a positive closed current $S$ of bidegree $(1,1)$ on $\mathbb P^n$, with analytic singularities, such that $\|S\|=1$, $\nu(S,x_j)>p/(p+1)$ for $1\leq j\leq p+1$, and $S$ is smooth at each point of $V\equiv\mathbb P^p$ outside an analytic subset of $V$. Then the pull-back $R$ of $S$ to $V$ is well defined, it has unit mass and Lelong number $>p/(p+1)$ at the linearly independent points $x_j$. This contradicts Theorem \ref{T:mt1} (or \cite[Proposition 2.2]{CG09}).
\end{proof}

\section{Proof of Theorem \ref{T:mt3}}\label{S:mt3pf} 

\par We prove first Theorem \ref{T:mt3} for currents of bidegree $(1,1)$ on $\mathbb P^n$, $n\geq3$. This is done in the following lemma. Let $\beta_n=(3n-4)/(3n-1)$. 

\begin{Lemma}\label{L:redux2} Let $T$ be a positive closed current of bidegree $(1,1)$ on $\mathbb P^n$, $n\geq3$, such that $\|T\|=1$ and $E^+_{\beta_n}(T,\mathbb P^n)$ contains a set $A=\{x_1,\dots,x_{n+1}\}$ of linearly independent points. Then:

\smallskip

(i) For every subset $B\subset A$ with $|B|=k+1$, $k\geq2$, there exists a positive closed current $R_B$ of bidegree $(1,1)$ on $\Span(B)\equiv\mathbb P^k$ such that $\|R_B\|=1$ and $E^+_{\beta_n}(T,\mathbb P^n)\cap\Span(B)\subset E^+_{\beta_k}(R_B,\Span(B))$.

\smallskip

(ii) $E^+_{\beta_n}(T,\mathbb P^n)\subset\bigcup_{1\leq j<k<l\leq n+1}P_{jkl}$, where $P_{jkl}=\Span(\{x_j,x_k,x_l\})$. 

\smallskip

(iii) There exist conics $C_{jkl}\subset P_{jkl}$ and points $z_{jkl}\in P_{jkl}$ such that $E^+_{\beta_n}(T,\mathbb P^n)\subset\bigcup_{1\leq j<k<l\leq n+1}\big(C_{jkl}\cup\{z_{jkl}\}\big)$.
\end{Lemma}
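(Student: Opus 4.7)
Part (i) parallels the proof of Lemma \ref{L:redux}(i), with $\alpha_n$ replaced by $\beta_n$. The plan is to first handle the single-step reduction $k=n-1$: write $T=a[H]+R$ via Siu decomposition, where $H=\Span(B)$ and $B\subset A$ has $|B|=n$. The point $x^{\ast}\in A\setminus B$ is linearly independent from $B$ and so lies off $H$, hence $\nu(R,x^{\ast})=\nu(T,x^{\ast})>\beta_n$ forces $a<1-\beta_n$. A direct computation gives $(2\beta_n-1)/\beta_n=(3n-7)/(3n-4)=\beta_{n-1}$, and since the function $a\mapsto(\beta_n-a)/(1-a)$ is decreasing, it exceeds $\beta_{n-1}$ throughout $[0,1-\beta_n)$; thus $R/(1-a)$ has Lelong number $>\beta_{n-1}$ at every point of $E^+_{\beta_n}(T,\mathbb{P}^n)\cap H$. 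Demailly regularization followed by restriction to $H$, exactly as in Lemma \ref{L:Meo}, produces the desired current $R_B$ on $\Span(B)\equiv\mathbb{P}^{n-1}$. Iterating this single step and tracking the threshold from $\beta_m$ down to $\beta_{m-1}$ at each stage delivers the general $k\geq 2$ case.

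For part (ii), I induct on $n\geq 3$. The key ingredient, used in both the base case and the induction step, is a rational normal curve argument modeled on Lemma \ref{L:redux}(ii). Suppose for contradiction that $y\in E^+_{\beta_n}(T,\mathbb{P}^n)$ lies outside $\bigcup_{j=1}^{n+1}H_j$, where $H_j=\Span(A\setminus\{x_j\})$. Choose $x_0$ with $\nu(T,x_0)=0$ so that $x_0,x_1,\dots,x_{n+1},y$ are in general position, and pass the unique rational normal curve $C$ of degree $n$ through these $n+3$ points. After replacing $T$ by a Demailly regularization $T'$ with analytic singularities preserving $\nu(T',x_j)>\beta_n$ and smooth near $x_0$, Corollary 5.10 of \cite{D93} yields
$$n=\int_{\mathbb{P}^n}T'\wedge[C]\geq\sum_{j=1}^{n+1}\nu(T',x_j)+\nu(T',y)>(n+2)\beta_n.$$
The elementary rearrangement $(n+2)\beta_n\geq n\iff 3n\geq 8$ contradicts this for $n\geq 3$, giving $E^+_{\beta_n}(T,\mathbb{P}^n)\subset\bigcup_j H_j$. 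When $n=3$ the $H_j$'s are already the planes $P_{jkl}$, yielding (ii). For $n\geq 4$, part (i) with $B_j=A\setminus\{x_j\}$ produces $R_{B_j}$ on $H_j\equiv\mathbb{P}^{n-1}$ containing the $n$ linearly independent points of $B_j$ in $E^+_{\beta_{n-1}}(R_{B_j},H_j)$, and the inductive hypothesis places $E^+_{\beta_n}(T,\mathbb{P}^n)\cap H_j$ inside the union of planes spanned by triples in $B_j$. Taking the union over $j$ recovers every $P_{jkl}$.

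Part (iii) is immediate from (i) and (ii). For each triple $\{j,k,l\}$, part (i) with $B=\{x_j,x_k,x_l\}$ produces a unit mass $(1,1)$-current $R_B$ on $P_{jkl}\equiv\mathbb{P}^2$ with $E^+_{\beta_n}(T,\mathbb{P}^n)\cap P_{jkl}\subset E^+_{2/5}(R_B,P_{jkl})$, since $\beta_2=2/5$. Theorem 1.2 of \cite{C06} places $E^+_{2/5}(R_B,P_{jkl})$ inside $C_{jkl}\cup\{z_{jkl}\}$ for a plane conic $C_{jkl}\subset P_{jkl}$ and a point $z_{jkl}\in P_{jkl}$, giving the claim.

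The principal obstacle is the numerical tightness of the rational normal curve step: the inequality $(n+2)\beta_n\geq n$ just barely holds for $n\geq 3$, so the threshold $\beta_n$ admits no slack and the elementary rearrangement must be verified directly. The remainder of the proof is assembly of tools already deployed in the paper (Siu decomposition, Demailly regularization, the intersection inequality of \cite{D93}) together with the outer induction on $n$.
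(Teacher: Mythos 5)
Your proposal is correct and takes essentially the same route as the paper: parts (i) and (ii) repeat the proof of Lemma \ref{L:redux} with the two numerical facts $(2\beta_n-1)/\beta_n=\beta_{n-1}$ and $(n+2)\beta_n\geq n$ for $n\geq 3$, both of which you verify correctly, and part (iii) combines (i), (ii) with Theorem 1.2 of \cite{C06} exactly as the paper does. Your induction on $n$ in (ii) is just an explicit organization of the paper's ``repeat the argument inductively'' descent through the subspaces spanned by subsets of $A$.
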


\begin{proof} Assertions $(i)$ and $(ii)$ are shown exactly as in the proof of Lemma \ref{L:redux}, using in \eqref{e:rec} the fact that $(2\beta_n-1)/\beta_n=\beta_{n-1}$, and in \eqref{e:rnc} the fact that $n>(n+2)\,\beta_n$ implies $n\leq2$, which contradicts the assumption that $n\geq3$. 

\par $(iii)$ Let $B=\{x_j,x_k,x_l\}$, where $1\leq j<k<l\leq n+1$. By $(i)$ there exists a positive closed current $R$ of bidegree $(1,1)$ on $P_{jkl}$ such that $\|R\|=1$ and $E^+_{\beta_n}(T,\mathbb P^n)\cap P_{jkl}\subset E^+_{2/5}(R,P_{jkl})$. Theorem 1.2 in \cite{C06} shows that there exist a conic $C_{jkl}\subset P_{jkl}$ and a point $z_{jkl}\in P_{jkl}$ such that $E^+_{2/5}(R,P_{jkl})\subset C_{jkl}\cup\{z_{jkl}\}$. Hence $(iii)$ follows from $(ii)$. 
\end{proof}

\par The next proposition is proved exactly like Proposition \ref{P:redux}, by using Lemma \ref{L:redux2} $(ii)$. 

\begin{Proposition}\label{P:redux2} Let $T$ be a positive closed current of bidimension $(p,p)$ on $\mathbb P^n$ such that $1<p<n-1$, $\|T\|=1$, and the set $E^+_{(3p-1)/(3p+2)}(T,\mathbb P^n)$ is not contained in a $p$-dimensional linear subspace of $\mathbb P^n$. If $W=\Span(E^+_{(3p-1)/(3p+2)}(T,\mathbb P^n))$ then $\dim W=p+1$ and there exists a positive closed current $R$ of bidegree $(1,1)$ on $W\equiv\mathbb P^{p+1}$ such that $\|R\|=1$ and $E^+_{(3p-1)/(3p+2)}(T,\mathbb P^n)\subset E^+_{(3p-1)/(3p+2)}(R,W)$.
\end{Proposition}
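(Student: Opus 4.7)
The plan is to mirror the argument of Proposition \ref{P:redux} verbatim, replacing Lemma \ref{L:redux} $(ii)$ by Lemma \ref{L:redux2} $(ii)$. The crucial numerical match is
\[
\beta_{p+1}=\frac{3(p+1)-4}{3(p+1)-1}=\frac{3p-1}{3p+2},
\]
so the threshold appearing in Lemma \ref{L:redux2} on $\mathbb P^{p+1}$ coincides exactly with the one we are working with, and the dimension hypothesis $n\geq 3$ of Lemma \ref{L:redux2} becomes $p+1\geq 3$, which holds since $p>1$.

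I would argue by contradiction, supposing $\dim W\geq p+2$, and extract linearly independent points $x_1,\dots,x_{p+3}\in E^+_{(3p-1)/(3p+2)}(T,\mathbb P^n)$. Set $U=\Span(\{x_1,\dots,x_{p+2}\})\equiv\mathbb P^{p+1}$ and pick $y\in U$ so that $\{x_1,\dots,x_{p+2},y\}$ is in general position in $\mathbb P^{p+1}$. The task is then to produce a positive closed current $S$ of bidegree $(1,1)$ on $U$ with $\|S\|=1$ and $\{x_1,\dots,x_{p+2},y\}\subset E^+_{(3p-1)/(3p+2)}(S,U)$.

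To construct $S$ I would copy the mechanism from Proposition \ref{P:redux}: choose $y_m\in W\setminus U$ with $y_m\to y$, select automorphisms $F_m$ of $\mathbb P^n$ fixing each $x_j$ for $j\leq p+2$ and sending $x_{p+3}$ to $y_m$, push forward to $T_m=(F_m)_\star T$, pass to a weak subsequential limit $T'$, use the Lelong-number semicontinuity result of \cite{D93} to get $\nu(T',x_j)>(3p-1)/(3p+2)$ for $j\leq p+2$ and $\nu(T',y)>(3p-1)/(3p+2)$, and finally apply Lemma \ref{L:Meo} to $T'$ with $V=U$ and $\alpha=(3p-1)/(3p+2)$ to obtain $S$.

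Lemma \ref{L:redux2} $(ii)$ applied to $S$ on $\mathbb P^{p+1}$ then forces $y$ to lie in some plane $P_{jkl}=\Span(\{x_j,x_k,x_l\})$ with $j,k,l\in\{1,\dots,p+2\}$. The hypothesis $p\geq 2$ yields a contradiction with general position: the $p+2$-element set consisting of $y$, $x_j$, $x_k$, $x_l$ together with any $p-2$ of the remaining $x_i$'s lies in the span of $\{x_j,x_k,x_l\}$ enlarged by those $p-2$ extra points, a subspace of dimension at most $p$, i.e., a hyperplane of $U$; this violates the general position of $\{x_1,\dots,x_{p+2},y\}$. Hence $\dim W=p+1$, and a final application of Lemma \ref{L:Meo} to $T$ with $V=W$ produces the required current $R$. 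I expect the most delicate point to be the general-position contradiction in the edge case $p=2$, where the planes $P_{jkl}$ are themselves the relevant obstruction hyperplanes of $U\equiv\mathbb P^3$; but there, $y\in P_{jkl}$ already makes the four points $\{y,x_j,x_k,x_l\}$ coplanar in $\mathbb P^3$, which directly contradicts general position.
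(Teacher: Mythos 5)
Your proof is correct and is essentially the paper's own argument: the paper simply states that Proposition \ref{P:redux2} is proved exactly like Proposition \ref{P:redux} using Lemma \ref{L:redux2} $(ii)$, which is precisely what you carry out, including the key identity $\beta_{p+1}=(3p-1)/(3p+2)$ and the general-position contradiction (valid since $p\geq2$ allows completing $\{y,x_j,x_k,x_l\}$ to $p+2$ points spanning at most a $p$-dimensional subspace of $U$).
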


\par Theorem \ref{T:mt3} for arbitrary $p\geq2$ now follows at once from Proposition \ref{P:redux2} and from Lemma \ref{L:redux2} $(iii)$.

\bigskip

\par We now turn our attention to the case of currents of bidimension $(1,1)$. If $L_1,L_2$ are non-concurrent lines in $\mathbb P^n$ and $T=([L_1]+[L_2])/2$ then $\dim\Span(E^+_{2/5}(T))=3$, and Theorem \ref{T:mt3} does not hold for $p=1$. However, we have the following geometric property of the set $E^+_{2/5}(T)$ in this setting:

\begin{Theorem}\label{T:mt31}
Let $T$ be a positive closed current of bidimension $(1,1)$ on $\mathbb{P}^n$ with $\|T\|=1$. If $|E^+_{2/5}(T)|>37$ then there exists a curve $C\subset\mathbb P^n$ of degree at most $2$ such that $| E^+_{2/5}(T)\setminus C |\leq1$. 
\end{Theorem}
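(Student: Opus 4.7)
The approach is to reduce to the two-dimensional result \cite[Theorem 1.2]{C06} through the Meo--Demailly machinery, and to dispatch the remaining case (where $E^+_{2/5}(T)$ spans more than a plane) by Vigny-type self-intersection inequalities for plane conics. Write $E:=E^+_{2/5}(T)$.

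First, I would invoke \cite{Me98} to produce a positive closed current $T'$ of bidegree $(1,1)$ on $\mathbb P^n$ with $\|T'\|=1$ and $\nu(T',z)=\nu(T,z)$ for every $z$, so $E=E^+_{2/5}(T')$ and $|E|>37$; then \cite[Proposition 3.7]{D92} supplies approximations $T'_m$ of bidegree $(1,1)$ with analytic singularities, unit mass, and $\nu(T'_m,z)\nearrow\nu(T',z)$ pointwise. If $\dim\Span(E)\leq 2$, pull back $T'_m$ to the plane $P=\Span(E)$ exactly as in the proof of Lemma~\ref{L:Meo} and extract a weak limit $S$: this yields a positive closed bidegree $(1,1)$ current on $P\cong\mathbb P^2$ with $\|S\|=1$ and $E\subset E^+_{2/5}(S,P)$, and \cite[Theorem 1.2]{C06} immediately produces the desired conic.

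Assume therefore $\dim\Span(E)\geq 3$; the goal is either to exhibit a line or plane conic covering all but one point of $E$, or to derive $|E|\leq 37$, contradicting the hypothesis. The key tool is the intersection inequality: for $T'_m$ with analytic singularities and any irreducible plane conic $C'\subset\mathbb P^n$ not contained in the singular set of $T'_m$, the positive measure $T'_m\wedge[C']$ has total mass $2$ and dominates $\sum_z\nu(T'_m,z)\,\mathrm{mult}_z(C')\,\delta_z$, so after passing to the limit
\[
\sum_{z\in C'}\nu(T',z)\cdot\mathrm{mult}_z C' \;\leq\; 2.
\]
Hence $|E\cap C'|\leq 4$ on any such good conic, and analogously $|E\cap L|\leq 2$ on any line $L$ outside the singular set of $T'$. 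The lines and conics which violate these bounds must sit inside some component of the Siu decomposition $T'=\sum_j a_j[H_j]+R$, constrained by $\sum_j a_j\deg H_j\leq 1$.

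The main obstacle is the combinatorial bookkeeping that converts these local inequalities into the global statement and pins down the sharp threshold $37$. Under the standing assumptions $\dim\Span(E)\geq 3$ and $|E|>37$, the mass budget $\sum_j a_j\deg H_j\leq 1$, the sharp Lelong-number inequalities for the residual current $R$, and the planar result \cite[Theorem 1.2]{C06} (applied to restrictions to individual planes via the method of Lemma~\ref{L:Meo}) must be combined to force a contradiction. Tracking how points of $E$ partition among the hypersurfaces $H_j$, among coplanar accumulations, and among "good" lines and conics — and in particular explaining why $37$ is exactly the right threshold — is the technically delicate part of the argument.
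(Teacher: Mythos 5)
There is a genuine gap, and in fact two problems. First, the decisive step is missing: you explicitly defer ``the combinatorial bookkeeping \dots explaining why $37$ is exactly the right threshold'' to an unspecified delicate argument, but that bound is the whole content of the theorem beyond the coplanar case, and your proposed tools (the per-conic bound $\sum_{z\in C'}\nu(T',z)\,\mathrm{mult}_zC'\le 2$, hence $|E\cap C'|\le 4$, and $|E\cap L|\le 2$ for lines) cannot produce a global cardinality bound: a priori $E$ could be a large finite set in general position meeting every line in $\le 2$ and every conic in $\le 4$ points, and no amount of Siu bookkeeping on the components $H_j$ is indicated that rules this out. The paper's mechanism is different: it splits according to whether $E^+_\gamma(T)$ is infinite for some $\gamma>1/3$. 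If it is finite for every such $\gamma$, one applies \cite{Me98} and then Demailly's Main Theorem 1.1 of \cite{D92} to get, for $\gamma\in(1/3,2/5)$ and $\epsilon>0$, a bidegree $(1,1)$ current $S_{\epsilon,\gamma}$ with $\|S_{\epsilon,\gamma}\|=1+\epsilon$, $\nu(S_{\epsilon,\gamma},x)=\max\{\nu(T,x)-\gamma,0\}$, smooth outside the finite set $E_\gamma(T)$; since $T$ has bidimension $(1,1)$, the measure $S_{\epsilon,\gamma}\wedge T$ is well defined by \cite{D93}, and comparing Lelong numbers gives $1+\epsilon>\tfrac25\bigl(\tfrac25-\gamma\bigr)\,|E^+_{2/5}(T)|$; letting $\gamma\to1/3$, $\epsilon\to0$ yields $|E^+_{2/5}(T)|\le\lfloor 75/2\rfloor=37$, which is where $37$ comes from. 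So it is the wedge of the (regularized, shifted) Meo transform against $T$ itself, not intersections with auxiliary conics, that produces the numerical threshold.

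Second, your case division by $\dim\Span(E)$ with target ``line or plane conic'' in the non-coplanar case cannot work as stated. Take $T=\tfrac12([L_1]+[L_2])$ with $L_1,L_2$ skew lines (the example the paper records just before Theorem~\ref{T:mt31}): then $E=E^+_{2/5}(T)=L_1\cup L_2$ is infinite, $\dim\Span(E)=3$, and no line or plane conic misses at most one point of $E$; the curve $C$ in the conclusion must be the degree-$2$ union of two skew lines. This configuration is handled by the other branch of the paper's dichotomy: if $E^+_\gamma(T)$ is infinite for some $\gamma>1/3$, Siu's theorem \cite{Siu74} gives an irreducible curve $X\subset E_\gamma(T)$ with $T\ge\gamma[X]$ and $\deg X\le 1/\gamma<3$; if $\deg X=2$ then $X$ is a plane conic by \cite{EH87} and the residual mass $1-2\gamma<1/3$ forces $E\subset X$, while if $\deg X=1$ the residual current has mass $<2/3$ and \cite[Theorem 1.1]{C06} supplies a line $L$ with all but one point of its $E_{1/3}$ on it, so $C=X\cup L$ (possibly skew) works. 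Your restriction-to-a-plane step via the method of Lemma~\ref{L:Meo} is fine for the coplanar case, but the rest of the proposal neither recovers the skew-lines configurations nor the bound $|E|\le 37$, so the proof is not complete.
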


\begin{proof} We consider two mutually exclusive cases. 

\smallskip

\par Case 1: The set $E^+_\gamma(T)$ is infinite for some $\gamma>1/3$. Then, by \cite{Siu74}, $E_\gamma(T)$ contains an irreducible curve $X$ and $T=T'+\gamma[X]$, where $T'$ is a positive closed current and $\deg X\leq1/\gamma<3$. If $\deg X=2$ then, by \cite[Proposition 0]{EH87}, $X$ is an irreducible plane conic. Moreover, $\|T'\|=1-2\gamma<1/3$, so $E^+_{2/5}(T)\subset X$. If $\deg X=1$ then $\|T'\|=1-\gamma<2/3$. It follows by \cite[Theorem 1.1]{C06} that $|E_{1/3}(T')\setminus L|\leq1$ for some line $L$. Since $E^+_{2/5}(T)\subset X\cup E_{1/3}(T')$ we conclude that $|E^+_{2/5}(T)\setminus C |\leq1$, where $C=X\cup L$. 

\smallskip

\par Case 2: The set $E^+_\gamma(T)$ is finite for all $\gamma>1/3$. By \cite{Me98}, there is a positive closed current $S$ of bidegree $(1,1)$ on $\mathbb P^n$ such that $||S||=||T||= 1$, and $S$ has the same Lelong number as $T$ at every point. Fix $\gamma\in(1/3,2/5)$. By Demailly's regularization theorem applied to $S$ (Main Theorem 1.1 in \cite{D92}, where we can take $u=0$ since we work on $\mathbb P^n$), for any $\epsilon >0$ there is a positive closed current $S_{\epsilon ,\gamma }$ of bidegree $(1,1)$ on $\mathbb P^n$ with the following properties:

\par (i) $S_{\epsilon ,\gamma}$ is smooth on $\mathbb{P}^n\setminus E_\gamma(T)$, hence $S_{\epsilon ,\gamma}$ is smooth outside a finite set. 

\par (ii) $\|S_{\epsilon ,\gamma }\|=1+\epsilon$ and $\nu (S_{\epsilon ,\gamma},x)=\max\{\nu(T,x)-\gamma,0\}$ at each $x\in\mathbb P^n$. \\
Let $A=E^+_{2/5}(T)$. Then $\nu (S_{\epsilon ,\gamma },x)>2/5-\gamma$ for $x\in A$. Since $S_{\epsilon ,\gamma }$ is smooth outside a finite set the measure $S_{\epsilon ,\gamma}\wedge T$ is well defined \cite{D93}. We estimate $|A|$ as follows: 
\begin{eqnarray*}
1+\epsilon=\int_{\mathbb P^n}S_{ \epsilon,\gamma} \wedge T\geq \sum _{x\in A}\nu ( S_{\epsilon,\gamma},x)\nu (T,x)>\frac{2}{5}\left(\frac{2}{5}-\gamma\right)|A|.
\end{eqnarray*}
Choosing $\epsilon>0$ very small and $\gamma >1/3$ very close to $1/3$ we find that $|A|\leq37$. 
\end{proof}

\bigskip

\par The argument in Case 2 of the proof of Theorem \ref{T:mt31} can be used to prove a more general result. 

\begin{Theorem}\label{T:KC} Let $(X,\omega)$ be a compact K\"ahler manifold of dimension $n$, and $T$ be a positive closed current of bidimension $(p,p)$ on $X$. For any $c>0$, the set $E_c(T)$ is contained in an analytic set of dimension $\leq p$ whose volume and number of irreducible components are bounded above by a constant $K(\|T\|,c)$ depending only on $\|T\|$ and $c$. 
\end{Theorem}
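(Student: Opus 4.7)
The plan is to extend the Case 2 argument of the proof of Theorem \ref{T:mt31} from bidimension $(1,1)$ on $\mathbb{P}^n$ to arbitrary bidimension $(p,p)$ on a compact K\"ahler manifold. By \cite{Siu74}, the set $E_c(T)$ is already an analytic subvariety of $X$ of dimension at most $p$, so the content of the statement lies entirely in the uniform control of its $\omega$-volume and of the number of its irreducible components.

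I would decompose $E_c(T) = \bigcup_{q=0}^{p} E^{(q)}_c(T)$ according to the dimensions of the irreducible components and treat each stratum separately. The top stratum is handled directly by Siu's decomposition $T = \sum_i \lambda_i [Y_i] + R$, in which $Y_i$ runs over the $p$-dimensional irreducible components of $E^+_0(T)$ and $\lambda_i = \nu(T, Y_i) \geq c$ whenever $Y_i \subseteq E_c(T)$; taking mass gives $c \sum_i \text{vol}(Y_i) \leq \|T\|$. For a lower stratum with $q < p$, fix $\gamma \in (0, c)$ and $\epsilon > 0$ and use Demailly's regularization \cite[Main Theorem 1.1]{D92} to produce a positive closed $(1,1)$-current $S_{\epsilon,\gamma}$ on $X$ with analytic singularities contained in $E_\gamma(T)$, smooth outside this set, with cohomology class of controlled mass, and with $\nu(S_{\epsilon,\gamma}, z) \geq \nu(T,z) - \gamma$ everywhere. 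The product $U := S_{\epsilon,\gamma}^{p-q} \wedge T$ is then well defined by \cite{D93}, has bidimension $(q, q)$, and by Demailly's comparison inequality satisfies $\nu(U, z) \geq (c-\gamma)^{p-q}\, c$ for every $z \in E_c(T)$. Applying Siu's decomposition to the $(q,q)$-current $U$ and comparing masses gives
$$(c-\gamma)^{p-q}\, c \,\text{vol}(E^{(q)}_c(T)) \leq \|U\| \leq C_q(\|T\|, \epsilon, X, \omega),$$
and letting $\epsilon \to 0$, $\gamma \to 0^+$ yields the desired volume bound on $E^{(q)}_c(T)$; summing over $q$ delivers $\text{vol}(E_c(T)) \leq K(\|T\|, c)$. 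The bound on the number of irreducible components then follows either from a positive lower bound on the $\omega$-volumes of irreducible analytic subvarieties of fixed dimension (depending only on $(X,\omega)$), or, equivalently, by counting the coefficients $\mu_Z \geq (c-\gamma)^{p-q}\, c$ that appear in the Siu decomposition of each $U$.

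The main obstacle is the construction of the auxiliary $(1,1)$-current $S_{\epsilon,\gamma}$ whose Lelong numbers dominate those of $T$ on $E_c(T)$. On $\mathbb{P}^n$, M\'eo's theorem \cite{Me98} produces a $(1,1)$-current with \emph{the same} Lelong numbers as $T$, which is then regularized directly; on a general compact K\"ahler manifold this reduction is not available, and one has to replace it either by a gluing of local plurisubharmonic potentials of $T$ (in the spirit of \cite{D92, Vig}), or by using Hironaka's resolution of $E_c(T)$ and pushing forward an appropriate effective divisor from a blow-up, with careful bookkeeping to ensure that the resulting mass is controlled by $\|T\|$ and by fixed geometric data of $(X,\omega)$.
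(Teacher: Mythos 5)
Your overall strategy (reduce to a $(1,1)$-current, regularize via \cite{D92}, wedge with $T$, then combine Siu's decomposition with the uniform lower bound on volumes of subvarieties to count components) is the same as the paper's, but as written it has two genuine gaps. First, the step you flag as the ``main obstacle'' --- producing on a compact K\"ahler manifold a positive closed $(1,1)$-current whose Lelong numbers dominate those of $T$ and whose mass is controlled by $\|T\|$ --- cannot be left as an obstacle with suggested workarounds: it is the crux of the argument, and the paper settles it by quoting Vigny's Lelong--Skoda transform \cite[Theorem 3.1]{Vig}, which yields a positive closed $(1,1)$-current $R$ with exactly the same Lelong numbers as the given current and $\|R\|\leq C_1\|T\|$ with $C_1=C_1(X,\omega)$; Demailly's regularization is then applied to $R$, not to $T$ itself. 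Your sketch of gluing local potentials or pushing forward divisors from a resolution is not carried out and does not by itself deliver the required mass bound, so the proof is incomplete at its central point.

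Second, even granting such a current $S_{\epsilon,\gamma}$ with analytic singularities contained in $E_\gamma(T)$, the product $U=S_{\epsilon,\gamma}^{p-q}\wedge T$ is in general not well defined by \cite{D93}: the unbounded locus of the potentials is essentially $E_\gamma(T)$, which can have components of dimension $p$ lying inside the support of $T$, whereas a $k$-fold wedge against a current of bidimension $(p,p)$ requires the unbounded locus to meet the support in dimension at most $p-k$. This is precisely why the paper does not wedge several copies against $T$ at once: it argues by induction on $p$, first removing the $p$-dimensional components of $E_{c/2}(T)$ via Siu's decomposition \cite{Siu74} (this gives the volume bound $2\|T\|/c$ for the top stratum, matching your argument), so that the residual current $T'$ has $\dim E_{c/2}(T')\leq p-1$ and the regularized Vigny transform $R'$ is smooth outside a set of dimension $\leq p-1$; then a single wedge $T_1=R'\wedge T'$ is well defined, has bidimension $(p-1,p-1)$, mass $\leq C\|T\|^2$, and Lelong number $\geq c^2/2$ at the points of the remaining components of $E_c(T)$, and the induction hypothesis is invoked with the degraded data $(C\|T\|^2,c^2/2)$. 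To repair your stratified argument you would have to subtract from $T$ the components of dimension $>q$ before wedging and control the singular set of the regularization accordingly, which in effect reproduces the paper's induction; your cleaner-looking threshold $(c-\gamma)^{p-q}c$ with mass depending only on $\|T\|$ rests on the ill-defined product.
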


\begin{proof} Recall that $\|T\|=\int_XT\wedge\omega^p$ and if $Z$ is an analytic subvariety of $X$ then ${\rm vol}\,Z=\sum_V\int_V\omega^{\dim V}$, where the sum is over all irreducible components $V$ of $Z$. By Lelong's theorem, there is a positive number $\mu _0$ such that any subvariety of $X$ has volume at least $\mu _0$. Therefore the number of irreducible components of $Z$ is $\leq({\rm vol}\,Z)/\mu_0$.

\par The proof is by induction on $p$. If $p=0$ then $T$ is a measure and $E_c(T)$ is a finite set whose cardinality is $\leq\|T\|/c$.

\par Assume that the theorem is true for $p=p_0$. We need to prove it for $p=p_0+1$. Let us define $A_{c/2,p_0+1}(T)$ to be the union of all irreducible components of dimension $p_0+1$ of the analytic set $E_{c/2}(T)$. Set 
\begin{eqnarray*}
T'=T-\sum_{V\subset A_{c/2,p_0+1}(T)}\lambda _V(T)[V]\,,
\end{eqnarray*}
where the sum is over all irreducible components $V$ of $A_{c/2,p_0+1}(T)$ and $\lambda _V(T)$ is the generic Lelong number of $T$ along $V$. By \cite{Siu74} $T'$ is a positive closed current of bidimension $(p_0+1,p_0+1)$ and $\|T'\|\leq\|T\|$. Moreover the set $E_{c/2}(T')$ has dimension at most $p_0$, since $E_{c/2}(T')\subset E_{c/2}(T)$ and $T'$ does not charge any irreducible component $V$ of $A_{c/2,p_0+1}(T)$.  Since $\lambda_V(T)\geq c/2$, we have that
\begin{eqnarray*}
{\rm vol}\,A_{c/2,p_0+1}(T)=\big\|\sum_{V\subset A_{c/2,p_0+1}(T)}[V]\big\|\leq 2\|T\|/c .
\end{eqnarray*}

By \cite[Theorem 3.1]{Vig} there is a positive closed current $R$ of bidegree $(1,1)$ on $X$ which has the same Lelong number as $T'$ at every point and such that $\|R\|\leq C_1\|T'\|$, where $C_1>0$ is a constant depending only on $X$ and $\omega$. By Demailly's regularization theorem applied to $R$ (Main Theorem 1.1 in \cite{D92}), there is a positive closed current $R'$ of bidegree $(1,1)$ on $X$ such that: $\|R'\|\leq C_2\|R\|$, where $C_2$ is a constant depending only on $X$ and $\omega$, $R'$ is smooth on $X\setminus E_{c/2}(T')$, and $\nu (R',x)=\max \{\nu (T',x)-c/2 ,0\}$ for every $x\in X$. Since $\dim E_{c/2}(T')\leq p_0$, $T_1=R'\wedge T'$ is a well defined positive closed current of bidimension $(p_0,p_0)$ by \cite{D93}. Moreover, $\|T_1\|\leq C_3\|T'\|\|R'\|\leq C\|T\|^2$, where $C=C_1C_2C_3$ and $C_3$ is a constant depending only on $X$ and $\omega$. By Demailly's comparison theorem for Lelong numbers \cite{D93} we have for $x\in E_c(T)\setminus A_{c/2,p_0+1}(T)$,
\begin{eqnarray*}
\nu (T_1,x)\geq \nu (R',x)\nu (T',x)\geq c^2/2.
\end{eqnarray*}
Therefore, if $W$ is the union of all the irreducible components of $E_c(T)$ that are not contained in $A_{c/2,p_0+1}(T)$, then $W\subset E_{c^2/2}(T_1)$. The  induction assumption implies that $E_{c^2/2}(T_1)$ is contained in an analytic subset of dimension $\leq p_0$ whose volume is  $\leq K(C\|T\|^2,c^2/2 )$. Thus the proof for the case $p=p_0+1$ is complete.
 \end{proof}

\par We note that it is not true that the number of irreducible components of the set $E_c(T)$ itself is bounded by a constant depending only on $\|T\|$ and $c$, as the following simple example shows. Let $L_j$, $0\leq j\leq k+1$, be lines in $\mathbb P^n$ so that $L_0\cap L_j=\{z_j\}$, $j\geq 1$, and no three of them pass through the same point. Let 
$$T=\left(\frac{1}{2}-\frac{1}{2k}\right)[L_0]+\frac{1}{2k}\sum_{j=1}^{k+1}[L_j]\,.$$
Then $\|T\|=1$ and $E_{1/2}(T,\mathbb P^n)=\{z_1,\dots,z_{k+1}\}$, provided that $k\geq3$.

\section{Positive closed currents on $\mathbb P^m\times\mathbb P^n$}\label{S:multiproj}

\par We prove here certain geometric properties of the upper level sets of Lelong numbers of positive closed currents of bidimension $(1,1)$ on a multiprojective space 
$$X={\mathbb P}^m\times{\mathbb P}^n={\mathbb P}^m_z\times{\mathbb P}^n_w\,.$$ 
Let $\pi_z:X\longrightarrow{\mathbb P}^m_z$, $\pi_w:X\longrightarrow{\mathbb P}^n_w$, denote the canonical projections and 
$$z=[z_0:\ldots:z_m]\,,\;w=[w_0:\ldots:w_n]\,,$$ 
denote the homogeneous coordinates on $\mathbb P^m$, respectively on $\mathbb P^n$. Set 
$$\omega_z=\pi_z^\star\omega_m\,,\;\omega_w=\pi_w^\star\omega_n\,,$$
where $\omega_m$ and $\omega_n$ are the Fubini-Study forms on ${\mathbb P}^m$, respectively $\mathbb P^n$. The Dolbeault cohomology group $H^{m+n-1,m+n-1}(X,\mathbb R)$ is generated by the forms $\omega_z^m\wedge\omega_w^{n-1}$ and $\omega_z^{m-1}\wedge\omega_w^n$. Let 
$$\theta_{a,b}=a\omega_z^m\wedge\omega_w^{n-1}+b\omega_z^{m-1}\wedge\omega_w^n\,,\;a,b\geq0\,,$$
and let $\mathcal T_{a,b}$ denote the space of positive closed currents of bidimension $(1,1)$ on $X$ which lie in the cohomology class of $\theta_{a,b}$. 

\begin{Proposition}\label{P:proj1}
If $T\in\mathcal T_{a,b}\,$ then $E^+_{(a+b)/2}(T,X)\subset\pi_z^{-1}(x)$ for some $x\in\mathbb P^m$ or  $E^+_{(a+b)/2}(T,X)\subset\pi_w^{-1}(y)$ for some $y\in\mathbb P^n$.
\end{Proposition}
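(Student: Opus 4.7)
My plan is to argue by contradiction. Suppose that $E:=E^+_{(a+b)/2}(T,X)$ is contained in neither a single fiber of $\pi_z$ nor a single fiber of $\pi_w$. I will produce two points $p,q\in E$ in sufficiently general position, pass a smooth hypersurface $H$ of bidegree $(1,1)$ through them, and derive a contradiction from the cohomological identity $\int_X T\wedge[H]=a+b$ combined with Demailly's comparison theorem for Lelong numbers.

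First (combinatorial step): I claim there exist $p,q\in E$ with both $\pi_z(p)\neq\pi_z(q)$ and $\pi_w(p)\neq\pi_w(q)$. Indeed, pick $p_0,q_0\in E$ with $\pi_z(p_0)\neq\pi_z(q_0)$, which is possible since $E\not\subset\pi_z^{-1}(x)$ for every $x$. If also $\pi_w(p_0)\neq\pi_w(q_0)$, take $p=p_0$ and $q=q_0$. Otherwise $\pi_w(p_0)=\pi_w(q_0)=:y_0$, and since $E\not\subset\pi_w^{-1}(y_0)$, pick $r\in E$ with $\pi_w(r)\neq y_0$; then $r$ forms a good pair with whichever of $p_0,q_0$ has $z$-projection distinct from $\pi_z(r)$.

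Next I choose a smooth hypersurface $H\subset X$ of bidegree $(1,1)$ passing through $p,q$. Bidegree-$(1,1)$ hypersurfaces are zero sets of bihomogeneous polynomials of bidegree $(1,1)$ and form a projective space of dimension $(m+1)(n+1)-1$. The conditions to pass through $p$ and through $q$ are linear and independent (as $p\neq q$), so those through both form a linear family of dimension $(m+1)(n+1)-3\geq 1$. Smooth members of this family are Zariski-dense (the discriminant is a proper subvariety), and hypersurfaces containing any of the at most countably many Siu components of $T$ (curves, since $T$ has bidimension $1$) form a countable union of proper subvarieties; a generic choice of $H$ is therefore smooth at $p,q$ and not charged by $T$, so that $T\wedge[H]$ is a well-defined positive measure on $X$. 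The cohomology identity $[H]=\omega_z+\omega_w$ together with $\omega_z^{m+1}=\omega_w^{n+1}=0$ yields
$$\int_X T\wedge[H]=\int_X\theta_{a,b}\wedge(\omega_z+\omega_w)=a+b,$$
whereas Demailly's comparison theorem for Lelong numbers \cite{D93}, combined with $\nu([H],p)=\nu([H],q)=1$ (smoothness of $H$ at $p,q$), gives
$$\int_X T\wedge[H]\geq\nu(T,p)+\nu(T,q)>2\cdot\frac{a+b}{2}=a+b.$$
Combining, $a+b>a+b$, a contradiction.

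The main technical point is the rigorous justification that $T\wedge[H]$ is well-defined as a positive measure with both the correct total mass and the pointwise Demailly lower bound at $p$ and $q$. This reduces to (i) that $T$ does not charge $H$, handled by the generic choice of $H$ in the positive-dimensional family of $(1,1)$-hypersurfaces through $p,q$, and (ii) that $H$ is smooth at $p,q$, which is generic in the same family. Should the direct intersection be delicate, the same argument can be run on Demailly regularizations $T_k\to T$ with analytic singularities, for which $T_k\wedge[H]$ is unambiguously defined by \cite[Proposition 3.7]{D92}, and then one passes to the limit using the pointwise convergence $\nu(T_k,\cdot)\to\nu(T,\cdot)$ together with continuity of the mass in cohomology.
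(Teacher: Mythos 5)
Your combinatorial reduction (producing $p,q\in E^+_{(a+b)/2}(T,X)$ with $\pi_z(p)\neq\pi_z(q)$ and $\pi_w(p)\neq\pi_w(q)$) is correct and is essentially the same skeleton as the paper's argument. The gap is in the intersection step. The current $T$ has bidimension $(1,1)$, i.e.\ bidegree $(m+n-1,m+n-1)$, and the product $T\wedge[H]$ with the integration current of a hypersurface $H$ is \emph{not} well defined merely because $H$ is smooth and $T$ does not charge $H$. The local potentials of $[H]$ are $\log|f|$, unbounded along all of $H$, and the criteria of \cite{D93} and \cite{FS95} for wedging a closed positive $(1,1)$-current against a bidimension-$(1,1)$ current require the unbounded locus of the potential to meet $\operatorname{supp} T$ in a set of $1$-dimensional Hausdorff measure zero (in practice: a finite or countable set). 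Since $\operatorname{supp} T$ may be all of $X$, this condition fails for \emph{every} hypersurface through $p$ and $q$, no matter how generically chosen, and with it both the mass identity $\int_X T\wedge[H]=a+b$ and the comparison inequality $\int_X T\wedge[H]\geq\nu(T,p)+\nu(T,q)$ that you invoke. Your fallback does not repair this: Demailly's regularization theorems in \cite{D92} apply to currents of bidegree $(1,1)$ (quasi-plurisubharmonic potentials), not to the bidimension-$(1,1)$ current $T$; and any genuine smoothing of $T$ destroys its Lelong numbers, with upper semicontinuity of $\nu$ under weak limits going the wrong way to recover the lower bound $\nu(T,p)+\nu(T,q)$ afterwards. (Transferring $T$ to a bidegree-$(1,1)$ current with the same Lelong numbers via \cite{Me98} or \cite{Vig} changes the problem and, on a compact K\"ahler manifold, only controls the mass up to a constant, which ruins the sharp threshold $(a+b)/2$.)

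The paper's proof solves exactly this difficulty by keeping $T$ as it is and replacing your smooth $H$ with a \emph{singular} representative of the class of $\omega_z+\omega_w$: an explicit current $S=\tfrac12 dd^c\log u$, with $u$ a maximum of two products of Hermitian norms adapted to the two points, which has bounded local potentials on $X\setminus\{p,q\}$ and Lelong number $1$ at each of $p,q$. Such an $S$ exists precisely because $p$ and $q$ have distinct projections in \emph{both} factors, and since its unbounded locus is the finite set $\{p,q\}$, the product $T\wedge S$ is well defined by \cite{D93} and yields $a+b=\int_X T\wedge S\geq\nu(T,p)+\nu(T,q)>a+b$, the contradiction you were aiming for. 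Without such a Green-type current with isolated poles (or some substitute argument making the wedge product legitimate), your proof does not go through.
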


\begin{proof} We may assume that $a+b=1$ and that for any $x\in\mathbb P^m$ we have $E^+_{1/2}(T,X)\not\subset\pi_z^{-1}(x)$. Then there exist points $p_1,p_2\in E^+_{1/2}(T,X)$ such that $\pi_z(p_1)\neq\pi_z(p_2)$. 

\par We claim that $\pi_w(p_1)=\pi_w(p_2)$. Indeed, suppose $\pi_w(p_1)\neq\pi_w(p_2)$. Composing with an automorphism of $X$ we may assume that 
$$p_1=([1:0:\ldots:0],[1:0:\ldots:0])\,,\;p_2=([0:\ldots:0:1],[0:\ldots:0:1])\,.$$
Consider the function on $\mathbb C^{m+1}\times\mathbb C^{n+1}$, 
$$u(z_0,\dots,z_m,w_0,\dots,w_n)=\max\left\{\big(\sum_{j=1}^m|z_j|^2\big)\big(\sum_{k=0}^{n-1}|w_k|^2\big),\big(\sum_{j=0}^{m-1}|z_j|^2\big)\big(\sum_{k=1}^n|w_k|^2\big)\right\}.$$
The current $\frac{1}{2}\,dd^c\log u$ determines a positive closed current $S$ of bidegree $(1,1)$ on $X$ in the cohomology class of $\omega_z+\omega_w$ (see e.g. \cite{G02,CG09}). Note that $S$ has bounded local plurisubharmonic potentials on $X\setminus\{p_1,p_2\}$ and $\nu(S,p_1)=\nu(S,p_2)=1$. Then 
$$a+b=\int_XT\wedge S\geq T\wedge S(\{p_1\})+T\wedge S(\{p_2\})\geq\nu(T,p_1)+\nu(T,p_2)>1\,,$$
a contradiction.

\par Hence  $\pi_w(p_1)=\pi_w(p_2)=y$. We show that $E^+_{1/2}(T,X)\subset\pi_w^{-1}(y)$. If not, there exists $p\in E^+_{1/2}(T,X)$ with $\pi_w(p)\neq y$. Since $\pi_z(p_1)\neq\pi_z(p_2)$ we may assume that $\pi_z(p)\neq\pi_z(p_1)$. Then we obtain a contradiction as above, working with the points $p,p_1$ instead of the points $p_1,p_2$. 
\end{proof}

\par Our next result is in analogy to that of Theorem \ref{T:mt1}. By {\em vertical line in $X$} we mean a line $L\subset\pi_z^{-1}(x)\equiv\mathbb P^n$ for some $x\in\mathbb P^m$, while by {\em horizontal line in $X$} we mean a line $L\subset\pi_w^{-1}(y)\equiv\mathbb P^m$ for some $y\in\mathbb P^n$.

\begin{Proposition}\label{P:proj2}
Let $T\in\mathcal T_{a,b}\,$ and $\alpha=\max\left\{\frac{2a+b}{3}\,,\frac{a+2b}{3}\right\}$.
Then $E^+_\alpha(T,X)$ is contained in a vertical line in $X$ or in a horizontal line in $X$. 
\end{Proposition}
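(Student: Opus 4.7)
The plan is to reduce via Proposition~\ref{P:proj1} to a configuration inside a single fiber and derive a contradiction by intersecting $T$ with a test bidegree~$(1,1)$ current that has isolated singularities at three chosen points. By the symmetry of the statement under swapping the two projections, I assume WLOG $a\geq b$, so $\alpha=(2a+b)/3$; after rescaling $a+b=1$, so $\alpha\geq 1/2=(a+b)/2$, and Proposition~\ref{P:proj1} places $E^+_\alpha(T,X)\subset E^+_{1/2}(T,X)$ inside a single fiber of one of the two projections. The two cases are symmetric; I focus on $E^+_\alpha(T,X)\subset F_z:=\pi_z^{-1}(x_0)\equiv\mathbb{P}^n$. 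For $n=1$, $F_z$ is itself a vertical line and the conclusion is immediate, so assume $n\geq 2$.

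Argue by contradiction: suppose three points $p_i=(x_0,y_i)\in E^+_\alpha(T,X)$, $i=1,2,3$, with $y_1,y_2,y_3$ non-collinear in $\mathbb{P}^n$. Let $V\subset H^0(X,\cO_X(1,2))$ be the subspace of bihomogeneous $(1,2)$-sections vanishing at $p_1,p_2,p_3$. After a product automorphism normalize $x_0=[1{:}0{:}\cdots{:}0]\in\mathbb{P}^m$ and $y_i=[0{:}\cdots{:}0{:}1{:}0{:}\cdots{:}0]\in\mathbb{P}^n$ with the $1$ in position $i-1$; writing $s=\sum a_{j,k,l}z_jw_kw_l$ (with $k\leq l$) the vanishing conditions become $a_{0,0,0}=a_{0,1,1}=a_{0,2,2}=0$. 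A direct monomial analysis (see below) shows that the base locus of $V$ in $X$ is exactly $\{p_1,p_2,p_3\}$ and that for each $i$ some section in $V$ vanishes to order exactly $1$ at $p_i$. Choose a basis $s_1,\ldots,s_N$ of $V$ and a smooth Hermitian metric $h$ on $\cO_X(1,2)$, and set
\[
S\;:=\;dd^c\log\max_\alpha|s_\alpha|_h,
\]
a positive closed bidegree~$(1,1)$ current on $X$ in cohomology class $\omega_z+2\omega_w$, with bounded local potentials on $X\setminus\{p_1,p_2,p_3\}$ and $\nu(S,p_i)=1$ for each $i$.

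Since $S$ has a zero-dimensional unbounded locus, the wedge $T\wedge S$ is a well-defined positive Radon measure on $X$, and its cohomological mass is
\[
\int_X T\wedge S\;=\;2a+b,
\]
by expansion using $\omega_z^{m+1}=\omega_w^{n+1}=0$ and $\int_X\omega_z^m\wedge\omega_w^n=1$. By Demailly's comparison inequality, $T\wedge S(\{p_i\})\geq\nu(T,p_i)\nu(S,p_i)>\alpha\cdot 1$ at each $p_i$, so summing,
\[
2a+b\;=\;\int_X T\wedge S\;\geq\;\sum_{i=1}^3 T\wedge S(\{p_i\})\;>\;3\alpha\;=\;2a+b,
\]
a contradiction. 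The case $E^+_\alpha(T,X)\subset F_w:=\pi_w^{-1}(y_0)\equiv\mathbb{P}^m$ (with $m\geq 2$) is treated symmetrically using the subspace of $H^0(X,\cO_X(2,1))$ of sections vanishing at three assumed non-collinear $x_i\in\mathbb{P}^m$; the analogous current $S'$ lies in class $2\omega_z+\omega_w$ with $\int_X T\wedge S'=a+2b$, and Demailly again gives $a+2b>2a+b$, i.e.\ $b>a$, contradicting $a\geq b$ (and when $a=b$ the strict Demailly bound still contradicts the equality $a+2b=2a+b$).

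The main technical step is the base-locus and Lelong-number analysis for $V$. For the base locus: sections $z_jw_kw_l$ with $j\geq 1$ force $z=x_0$ at any base point (otherwise some $z_j\neq 0$ would force $w_kw_l=0$ for all $k,l$, impossible); then among sections with $j=0$ and $(k,l)\notin\{(0,0),(1,1),(2,2)\}$, the cross terms $w_kw_l$ force, by a further case analysis, all but one of the $w_k$ to vanish, identifying $w$ as one of $y_1,y_2,y_3$. For the Lelong numbers, in the affine chart $z_0=w_0=1$ with local coordinates $\zeta_j=z_j$ ($j\geq 1$), $\xi_k=w_k$ ($k\geq 1$), the section $z_0w_0w_1\in V$ restricts to $\xi_1$, which has vanishing order exactly $1$ at $p_1$; analogous order-$1$ sections exist at $p_2$ and $p_3$, so $\nu(S,p_i)=\min_\alpha\mathrm{ord}_{p_i}(s_\alpha)=1$.
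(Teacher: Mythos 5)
Your proof is correct and follows essentially the same route as the paper: reduce to a single fiber via Proposition \ref{P:proj1}, then intersect $T$ with a bidegree $(1,1)$ test current in the class $\omega_z+2\omega_w$ (resp.\ $2\omega_z+\omega_w$) having isolated singularities and Lelong number $1$ at the three assumed non-collinear points, and contradict $3\alpha\geq 2a+b$ via Demailly's comparison theorem. The only cosmetic difference is the construction of the test current: you take $dd^c\log\max$ over the linear system of $(1,2)$-sections vanishing at $p_1,p_2,p_3$ and check its base locus, while the paper builds it explicitly as $\tfrac12\,dd^c\log u$ with $u$ a maximum of two terms involving quadrics through $y^1,y^2,y^3$; both yield the same cohomology class and singularity data.
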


\begin{proof} Since $\alpha\geq(a+b)/2$ it follows by Proposition \ref{P:proj1} that $E^+_\alpha(T,X)\subset\pi_z^{-1}(x)$ for some $x\in\mathbb P^m$ or  $E^+_\alpha(T,X)\subset\pi_w^{-1}(y)$ for some $y\in\mathbb P^n$. Without loss of generality we may assume that $E^+_\alpha(T,X)\subset\pi_z^{-1}(x)$, where $x=[1:0:\ldots:0]\in\mathbb P^m$. We will show that $E^+_\alpha(T,X)$ is contained in a line in $\pi_z^{-1}(x)\equiv\mathbb P^n$.

\par Suppose for a contradiction that there exist non-collinear points $y^1,y^2,y^3\in\mathbb P^n$ such that $p_j:=(x,y^j)\in E^+_\alpha(T,X)$. We can find homogeneous quadratic polynomials $P_1,\dots,P_n$ on $\mathbb C^{n+1}$ such that the set $\{P_1=0\}\cap\ldots\cap\{P_n=0\}\subset\mathbb P^n$ is finite and it contains the points $y^1,y^2,y^3$. Moreover, the $2\omega_n$-plurisubharmonic function on $\mathbb P^n$ determined by $\frac{1}{2}\log\big(\sum_{k=1}^n|P_k|^2\big)$ has Lelong number 1 at each $y^j$. Consider the function on $\mathbb C^{m+1}\times\mathbb C^{n+1}$, 
$$u(z,w)=\max\left\{\big(\sum_{j=0}^m|z_j|^2\big)\big(\sum_{k=1}^n|P_k(w)|^2\big),\big(\sum_{j=1}^m|z_j|^2\big)\big(\sum_{k=0}^n|w_k|^2\big)^2\right\}.$$

The current $\frac{1}{2}\,dd^c\log u$ determines a positive closed current $S$ of bidegree $(1,1)$ on $X$ in the cohomology class of $\omega_z+2\omega_w$ (see e.g. \cite{G02,CG09}). Note that $S$ has bounded local plurisubharmonic potentials on the complement of a finite subset of $X$ and $\nu(S,p_j)=1$. Then 
$$2a+b=\int_XT\wedge S\geq\sum_{j=1}^3T\wedge S(\{p_j\})\geq\sum_{j=1}^3\nu(T,p_j)>3\alpha\,,$$
a contradiction. This completes the proof.
\end{proof}

\medskip

\par We end this section with some examples showing that Propositions \ref{P:proj1} and \ref{P:proj2} are sharp. Consider distinct points $x_1,x_2\in\mathbb P^m$, $y_1,y_2\in\mathbb P^n$, and let $p_{jk}=(x_j,y_k)\in X$. For $j=1,2$, denote by $V_j\subset\pi_z^{-1}(x_j)$ the vertical line determined by the points $p_{j1},p_{j2}$, and by $H_j\subset\pi_w^{-1}(y_j)$ the horizontal line determined by the points $p_{1j},p_{2j}$. Let $a,b>0$ and $T_1,T_2\in\mathcal T_{a,b}$ be the currents 
$$T_1=\frac{a}{2}\,([V_1]+[V_2])+b[H_1]\,,\;T_2=a[V_1]+\frac{b}{2}\,([H_1]+[H_2])\,.$$
Then 
\begin{eqnarray*}
&&\{p_{11},p_{21}\}\subset E^+_{(a+b)/2}(T_1,X)\subset H_1\subset\pi_w^{-1}(y_1)\,,\\
&&\{p_{11},p_{12}\}\subset E^+_{(a+b)/2}(T_2,X)\subset V_1\subset\pi_z^{-1}(x_1)\,.
\end{eqnarray*}
Hence the set $E^+_{(a+b)/2}(T,X)$ can be contained in a vertical fiber or in a horizontal fiber, regardless of how $a$ compares to $b$. Assume next that $a\geq b$ and let 
$$T_3=\frac{a+b}{2}\,[V_1]+\frac{a-b}{2}\,[V_2]+b[H_1]\,,\,\text{ so }\,V_1\cup\{p_{21}\}\subset E_{(a+b)/2}(T_3,X)\,.$$
Hence $E_{(a+b)/2}(T_3,X)\not\subset\pi_z^{-1}(x)$ for any $x\in\mathbb P^m$, $E_{(a+b)/2}(T_3,X)\not\subset\pi_w^{-1}(y)$ for any $y\in\mathbb P^n$, and Proposition \ref{P:proj1} is sharp.

\par The next example shows the sharpness of Proposition \ref{P:proj2}. Assume $a\geq b$, let $x\in\mathbb P^m$, let $y_1,y_2,y_3\in\mathbb P^n$ be non-collinear points, and set $p_j=(x,y_j)$. Denote by $V_{jk}\subset\pi_z^{-1}(x)$ the vertical line determined by the points $p_j,p_k$, and by $H_j\in\pi_w^{-1}(y_j)$ a horizontal line containing $p_j$. Then 
$$T_4=\frac{a}{3}\,([V_{12}]+[V_{23}]+[V_{13}])+\frac{b}{3}\,([H_1]+[H_2]+[H_3])\in\mathcal T_{a,b}$$
has $\nu(T_4,p_j)=(2a+b)/3=\alpha$, where $\alpha$ is as in Proposition \ref{P:proj2}. Thus $E_\alpha(T_4,X)$ is not contained in a vertical line or in a horizontal line in $X$.

\end{document}